\newtheorem{remark}[theorem]{Remark}
\newtheorem{prop}[theorem]{Proposition}
\newcommand{\bthm}{\begin{thm}}
\newcommand{\ethm}{\end{thm}}
\newcommand{\bpr}{\begin{proof}}
\newcommand{\epr}{\end{proof}}
\newcommand{\sg}{\sigma}
\newcommand{\lb}{\left\lbrace}
\newcommand{\rb}{\right\rbrace}
\newcommand{\bra}[1]{\lb #1\rb}
\newcommand{\lve}{\left\lvert}
\newcommand{\rve}{\right\rvert}
\newcommand{\abs}[1]{\left\lvert#1\right\rvert}
\newcommand{\pa}[1]{\left(#1\right)}
\newcommand{\Set}[2]{\lb #1 \;\middle\vert\; #2 \rb}
\newcommand{\del}{\nabla}
\newcommand{\nder}{\nu\cdot\sg\del}
\newcommand{\R}{\mathbb{R}}
\newcommand{\C}{\mathbb{C}}
\newcommand{\N}{\mathbb{N}}
\newcommand{\Z}{\mathbb{Z}}
\newcommand{\F}{\mathcal{F}}
\newcommand{\B}{\mathcal{B}}
\newcommand{\norm}[1]{\lVert #1 \lVert}
\newcommand{\dual}[1]{\big\langle #1 \big\rangle}
\newcommand{\0}{\circ}
\newcommand{\beq}{\begin{equation}}
\newcommand{\eeq}{\end{equation}}
\newcommand{\nn}{\nonumber}
\mathchardef\mhyphen="2D
\begin{document}
\sloppy 

\pagestyle{myheadings}
\thispagestyle{plain}

\title{Simultaneous reconstruction of outer boundary shape and 
admittivity distribution in electrical impedance tomography}
\author{
J. Dard\'e\thanks{Department of Mathematics and Systems Analysis, 
Aalto University, P.O.~Box 11100, FI-00076 Aalto,
Finland ({\tt jeremi.darde@aalto.fi}). The work of J. Dard\'e  was 
supported by the Finnish Funding Agency for Technology and Innovation TEKES (contract 40370/08).} \and
N. Hyv\"onen\thanks{Department of Mathematics and Systems Analysis, 
Aalto University, P.O.~Box 11100, FI-00076 Aalto,
Finland ({\tt nuutti.hyvonen@aalto.fi}). The work of N. Hyv\"onen  was 
supported by the Finnish Funding Agency for Technology and Innovation TEKES (contract 40370/08) and the Academy of Finland (decision 135979).} \and 
A. Sepp\"anen\thanks{Department of Applied Physics, University of Eastern Finland, P.O.~Box 1627, FI-70211 Kuopio, Finland ({\tt aku.seppanen@uef.fi}). The work
of Aku Sepp\"anen was supported by the Academy of Finland (the Centre of Excellence in Inverse Problems Research and decisions 140280, 250215).} \and 
S. Staboulis\thanks{Department of Mathematics and Systems Analysis, 
Aalto University, P.O.~Box 11100, FI-00076 Aalto,
Finland ({\tt stratos.staboulis@aalto.fi}). The work of S. Staboulis  was 
supported by the Academy of Finland (decision 141044).}}

\maketitle

\begin{abstract}
The aim of electrical impedance tomography is to reconstruct the admittivity
distribution inside a physical body from boundary measurements of current
and voltage. Due to the severe ill-posedness of the underlying inverse problem,
the functionality of impedance tomography relies heavily on accurate 
modelling of the measurement geometry. In particular, almost all 
reconstruction algorithms require the precise shape of the imaged body 
as an input.
In this work, the need for prior geometric information
is relaxed by introducing a Newton-type output least squares
algorithm that reconstructs the admittivity distribution and the object
shape simultaneously. 
The method is built in the framework of the complete electrode model and it is
based on the Fr\'echet derivative of the corresponding current-to-voltage
map with respect to the object boundary shape. The functionality of the 
technique is demonstrated via numerical experiments with simulated measurement
data.
\end{abstract}

\begin{keywords}
electrical impedance tomography, shape derivative,
model inaccuracies, output least squares, complete electrode model, unknown
boundary shape
\end{keywords}

\begin{AMS}
{\sc 65N21, 35R30, 35J25}
\end{AMS}

\pagestyle{myheadings}

\markboth{J. DARD\'E, N. HYV\"ONEN, A. SEPP\"ANEN AND S. STABOULIS}
{RECONSTRUCTION OF OUTER BOUNDARY SHAPE IN EIT}

\section{Introduction}

{\em Electrical impedance tomography} (EIT) is a noninvasive imaging technique
which
has applications, e.g., in medical imaging, process tomography, and nondestructive
testing of materials \cite{Borcea02,Cheney99,Uhlmann09}. The objective of 
EIT is to reconstruct the admittivity distribution inside a
physical body $\Omega$ from boundary measurements of current and voltage. 
The most
accurate model for EIT is the {\em complete electrode
model} (CEM), which takes into account electrode shapes and 
contact impedances at electrode-object interfaces \cite{Cheng89}. 
%the CEM is capable of predicting real-life measurements up to the measurement precision \cite{Cheng89,Somersalo92}.

A real-life measurement setting of EIT typically contains more unknowns than 
the mere admittivity distribution: The exact electrode locations, the 
contact impedances and the shape of the imaged object are not 
necessarily known accurately. (As an example, consider a medical 
application where the body shape and the contact impedances vary from 
patient to patient.) These kinds of inaccuracies comprise a considerable 
difficulty for establishing EIT as a practical imaging modality since it is
well known 
that even slight mismodelling can quite easily ruin the reconstruction of the 
admittivity \cite{Barber88, Breckon88, Kolehmainen97}. The problems resulting from the aforementioned model uncertainties have partly been resolved in earlier works: Two alternative ways to handle unknown contact impedances have been introduced in \cite{Nissinen09,Vilhunen02}, and fine-tuning the information on electrode positions has been considered in \cite{Darde12}. A brief review of the approaches to tackling the problem with an unknown object boundary shape is given in the following; for a more extensive discussion, see \cite{Nissinen11b}.

Undoubtedly the most common way to treat problems resulting from an inaccurately known boundary shape is the use of {\it difference imaging}, where the alteration in the admittivity distribution is reconstructed on the basis of the difference between EIT measurements corresponding to two time instants (or frequencies) \cite{Barber84}. The method is based on the idea that the modeling errors are partly removed when difference data are used --- given that the boundary shape remains unchanged between the two measurements. However, the difference imaging approach is highly approximative, because it relies on a linearization of the highly nonlinear forward model of EIT. Moreover, even if difference data are available, the boundary shape may also have changed between the measurements. This is the case, e.g., when imaging a human chest during a breathing cycle. A successful approach to coping with an unknown object boundary in {\it absolute} EIT imaging was suggested by Kolehmainen, Lassas and Ola \cite{Kolehmainen05,Kolehmainen07}. Their method is based on allowing slightly anisotropic conductivities and on the use of sophisticated mathematical instruments such as quasiconformal maps and Teichm\"uller spaces. In~\cite{Nissinen11} the so-called approximation error approach \cite{Kaipio05} was adapted to the compensation for errors resulting from an inaccurately known boundary shape in the framework of EIT. The approximation error method is based on the Bayesian inversion paradigm; the governing idea is to represent the error due to inaccurate modeling of the target as an auxiliary noise process. The (second order) statistics of the modeling error are approximated via simulations based on prior probability models for the admittivity and the boundary shape. The application of EIT to imaging of human thorax was considered in  \cite{Nissinen11}, where the approximated statistics of the modeling error were computed based on an atlas of anatomical CT chest images. In \cite{Nissinen11b}, the method was further developed to allow the reconstruction of the boundary shape. See also \cite{Tossavainen04,Tossavainen06}, where an optimization based technique was applied to the estimation of partially unknown boundary shape in  process tomography applications.

This work introduces an iterative Newton-type output least squares
algorithm that tolerates uncertainties in the geometry of 
the imaged object. To be more precise, our aim is to include
the estimation of the shape of the object boundary as a part of the
reconstruction method. The required Fr\'echet derivative of the
measurement map of the CEM with respect
to the exterior boundary shape is obtained with the help of domain derivative
techniques stemming from \cite{Kirsch93, Hettlich95, Hettlich98, Hyvonen10}; 
see also \cite{Delfour01} for a general theory of shape differentiation. 
However, unlike in \cite{Kirsch93, Hettlich95, Hettlich98, Hyvonen10}, 
the elliptic boundary value problem defining the derivative falls outside
the standard $H^1(\Omega)$-based variational theory due to Dirac delta type
boundary conditions on the edges of the electrodes. This difficulty is
tackled following the guidelines in \cite{Darde12}, where 
Fr\'echet derivatives with respect to electrode shapes were considered, 
resulting in
a well-posed `derivative problem' that is uniquely solvable 
in $H^{1-\epsilon}(\Omega)$, $\epsilon > 0$.

Our approach is made computationally more tractable by introducing 
a dual method for sampling the $H^{1-\epsilon}$-regular shape derivative; 
in particular, it turns out that the reconstruction algorithm can be 
implemented without having to solve any forward problems with distributional 
boundary conditions. This observation is concretized by the numerical
examples clearly demonstrating that the electrode measurements of 
EIT carry information on both the admittivity distribution 
and the object boundary shape.
The numerical studies are based on simulated measurement data and carried
out in three-dimensions, with the corresponding parameter choices founded on 
the Bayesian paradigm \cite{Kaipio05}.

%To put our ideas into perspective, let us briefly survey some earlier studies tackling uncertainties in the shape of the object boundary in the framework of EIT.

%To put our ideas into perspective, let us briefly survey some earlier studies tackling uncertainties in the shape of the object boundary in the framework of EIT. The approximation error methodology has been successfully employed to obtain reconstructions of the admittivity in case the measurement setting is modelled inaccurately \cite{Nissinen08,Nissinen09,Nissinen11}: The leading idea is to use numerical simulations together with prior information about the unknown parameters to approximate and store the statistics of the inaccuracies caused by the mismodelling in advance. This knowledge is then utilized in handling the modelling errors, essentially, in the same way as the measurement noise in some Bayesian inversion algorithm (cf., e.g., \cite{Kaipio05}). Another successful approach for coping with an unknown object boundary was suggested by Kolehmainen, Lassas and Ola in \cite{Kolehmainen05}. Their method is based on allowing slightly anisotropic  admittivities and on the use of sophisticated mathematical instruments such as quasiconformal maps and Teichm\"uller spaces.

This text is organized as follows. Section \ref{sec:CEM} recalls the CEM and 
its fundamental properties. The main 
Fr\'echet differentiability result is formulated in Section \ref{sec:Shape} and its proof
is given in Section \ref{sec:Proof}. Section \ref{sec:Algorithm} introduces the
reconstruction algorithm, which is then tested numerically in Section \ref{sec:Numerics}.
Finally, Section \ref{sec:Conclusion} lists the concluding remarks.

\section{Complete electrode model}
\label{sec:CEM}

Let $\Omega\subset{\R}^n$, $n=2$ or $3$, be a bounded domain and assume that its boundary ${\partial\Omega}$ is an orientable $C^\infty$-manifold. We denote by $\sigma \colon \Omega \to {\C}^{n\times n}$ the electrical admittivity distribution of $\Omega$ and assume that it satisfies the following, physically reasonable conditions \cite{Borcea02}: 
\begin{align}\label{sigma}
\sigma=\sigma^{\rm T},\qquad {\rm Re}\,(\sigma\xi\cdot\overline{\xi}) \geq C_1 |\xi|^2, \qquad |\sigma\xi\cdot\overline{\xi}| \leq C_2|\xi|^2 
\end{align} 
for some constants $C_1,C_2 > 0$ and for all $\xi\in{\C}^n$ almost everywhere in $\Omega$. 

Assume that the boundary ${\partial\Omega}$ is partially covered with 
$M \in {\N} \setminus \{1\}$ 
well-separated, open, bounded and connected electrodes $\{E_m\}_{m=1}^M$, i.e.,
\begin{equation}\begin{split}\label{E} 
E_m \subset {\partial\Omega},\;\; m= 1, \dots , M, \quad {\rm and} \quad \overline{E}_j\cap \overline{E}_k = \varnothing, \;\;\; j\neq k.
\end{split}\end{equation} 
The electrodes are modelled as ideal conductors. The union of the electrodes is denoted by $E = \cup_m E_m$, and the frequency domain representations of the time-harmonic electrode current and potential patterns by the vectors $I = [I_m]_{m=1}^M$ and $U = [U_m]_{m=1}^M$ of ${\C}^M$, 
respectively, where $I_m,U_m\in{\C}$ correspond to the measurements on the $m$th
electrode. 
Take note that the current vector $I$, actually, belongs to the subspace
\begin{equation}\begin{split} 
{\C}^M_\diamond := \bigg\{ [c_1,\ldots ,c_M] \in {\C}^M \ \Big| \ \sum_{m=1}^M c_m= 0 \bigg\} 
\end{split}\end{equation}
due to to the current conservation law.
The contact impedances (cf. \cite{Cheng89}) that characterize the thin and highly resistive layers at the electrode-object interfaces are modelled by $z\in {\C}^M$ that is assumed to satisfy
\begin{equation}\begin{split}
\label{z} {\rm Re}\, z_j > 0, \qquad j = 1, \dots , M.
\end{split}\end{equation}

According to the CEM \cite{Cheng89,Somersalo92}, the pair $(u,U) \in {\mathcal{H}^1}(\Omega) := (H^1(\Omega)\oplus {\C}^M)/{\C}$, composed of the electromagnetic potential within $\Omega$ and those on the electrodes, is the
unique solution of the elliptic boundary value problem
\begin{equation}
\label{cemeqs}
\begin{array}{ll}
\nabla \cdot \sigma\nabla u = 0 \qquad  &{\rm in}\;\;\Omega, \\[8pt] 
\nu\cdot\sigma\nabla u = 0 \qquad &{\rm on}\;\;{\partial\Omega}\setminus\overline{E},\\[8pt] 
u+z_m\nu\cdot\sigma\nabla u = U_m \qquad &{\rm on}\;\; E_m, \quad m=1, \dots, M, \\[4pt] 
{\displaystyle \int_{E_m}\nu\cdot\sigma\nabla u\,dS} = I_m, \qquad & m=1,\ldots,M, 
\end{array}
\end{equation}
for a given net electrode current pattern $I \in {\C}^M_\diamond$ and with 
$\nu = \nu(x)$ denoting the exterior unit normal of $\partial \Omega$.
The definition of ${\mathcal{H}^1}(\Omega)$ as a quotient space emphasizes the freedom in the
choice of the ground level of potential; in other words, one can never
measure absolute potentials, only potential differences.

The weak formulation of the CEM forward problem (\ref{cemeqs}) is to find $(u,U) \in {\mathcal{H}^1}(\Omega)$ that satisfies \cite{Somersalo92}
\begin{equation}\begin{split}\label{perus}
B \! \left\lbrace(u,U),(v,V)\right\rbrace = \sum_{m=1}^M I_m \overline{V}_m  \qquad \mbox{for all} \
(v,V) \in {\mathcal{H}^1}(\Omega),
\end{split}\end{equation}
where the sesquilinear form $B\colon {\mathcal{H}^1}(\Omega)\times{\mathcal{H}^1}(\Omega) \to {\C}$ 
is defined by 
\begin{equation}\begin{split}
\label{B} B \! \left\lbrace(u,U),(v,V)\right\rbrace=\int_{\Omega}\sigma\nabla u\cdot \nabla \overline{v} \,dx + 
\sum_{m=1}^M \frac{1}{z_m} \int_{E_m}( U_m -u ) ( \overline{V}_m -\overline{v} )\, dS. 
\end{split}\end{equation}
The form $B$ is concordant with the natural quotient topology of ${\mathcal{H}^1}(\Omega)$ 
(cf.~\cite[Corollary 2.6]{Hyvonen04}), i.e., for all $(u,U),(v,V)\in {\mathcal{H}^1}(\Omega)$
\begin{align*} 
\lve B\!\left\lbrace(u,U),(v,V)\right\rbrace\rve &\leq C_1\norm{(u,U)}_{{\mathcal{H}^1}(\Omega)}\norm{(v,V)}_{{\mathcal{H}^1}(\Omega)}, \\[2mm]
{\rm Re}\, B\!\left\lbrace(v,V),(v,V)\right\rbrace &\geq C_2\norm{(v,V)}^2_{{\mathcal{H}^1}(\Omega)},
\end{align*} 
where
$$ 
\norm{(v,V)}_{{\mathcal{H}^1}(\Omega)} := \inf_{c\in{\C}}\Big\{ \norm{v-c}^2_{H^1(\Omega)} + \sum_{m=1}^M \abs{V_m-c}^2 \Big\}^{1/2}.
$$ 
The unique solvability of (\ref{cemeqs}) follows by combining 
the above estimates and the obvious boundedness of the antilinear 
functional on the right-hand side
of (\ref{perus}) with the Lax--Milgram lemma \cite{Hyvonen04,Somersalo92}. 
This procedure also provides the estimate
\begin{equation}
\label{bbound}
\| (U,u) \|_{{\mathcal{H}^1}(\Omega)} \leq \ C |I|,
\end{equation}
where the constant of continuity $C = C(\Omega,\sigma,z)$ can be chosen 
independently of the electrodes if it is assumed that 
\begin{equation}
\label{e_ehto}
\min_{1\leq m \leq M} |E_m| \geq c
\end{equation}
for some constant $c>0$ (cf., e.g., \cite[(2.4)]{Hanke11}).
In the rest of this work, we make the assumption (\ref{e_ehto}) on the
considered electrode configurations implicitly. 

An ideal measurement corresponding to the CEM provides the electrode voltages 
$U \in {\C}^M / {\C}$ for some applied current pattern $I \in {\C}^M_\diamond$. For a given measurement setting $\bra{\Omega,E,\sigma,z}$, we thus define the measurement operator $R\colon {\C}^M_\diamond \to {\C}^M/{\C}$ by 
\begin{equation}\begin{split}\label{c-to-v}  
R: I \mapsto U. 
\end{split}\end{equation}
Obviously, $R$ is linear and bounded (cf.~(\ref{cemeqs}) and (\ref{bbound})), 
with a constant of continuity that can be chosen independently of the electrode 
configuration under the assumption~(\ref{e_ehto}).

To conclude this section, we note that for smooth $\sigma$ the interior 
potential has more regularity, namely
$$
\nu\cdot\sigma \nabla u|_E\in H^1(E), \quad \nder u|_{\partial\Omega}\in H_\diamond^{1/2-\epsilon}({\partial\Omega}), \quad u\in H^{2-\epsilon}(\Omega)/{\C}
$$ 
for all $\epsilon > 0$, as reasoned in \cite[Remark 1]{Darde12}. When
appropriate, we emphasize the last statement by writing $(u,U) \in 
\mathcal{H}^{2-\epsilon}(\Omega) := (H^{2-\epsilon}(\Omega)\oplus {\C}^M)/{\C}$.

\section{Shape derivative}
\label{sec:Shape}
In this section, we introduce the derivative of the CEM measurement map
with respect to perturbations of the object boundary $\partial \Omega$.
We begin by specifying how exactly the boundary is perturbed.

%Unless otherwise mentioned, the dimension $n = 2,3$ throughout this section. 
For $h\in C^1({\partial\Omega},{\R}^n)$ we define 
$$
F[h](x) = x + h(x), \qquad x \in {\partial\Omega},
$$
and use the abbreviation ${\partial\Omega}_h$ for the perturbed boundary, that is, 
$$
{\partial\Omega}_h = F[h]({\partial\Omega}) = \Set{y\in {\R}^n}{y=F[h](x)\;\;{\rm for\;some}\;\, x\in{\partial\Omega}}.
$$
The open, origin-centered ball of radius $d>0$ in the topology of $C^1({\partial\Omega},{\R}^n)$ is denoted by $\mathcal{B}_d$, i.e., 
$$
\mathcal{B}_d = \Set{h\in C^1({\partial\Omega},{\R}^n)}{\norm{h}_{C^1({\partial\Omega},{\R}^n)} < d}.
$$
Following \cite{Delfour01}, we introduce a special family of diffeomorphisms
of ${\R}^n$ to itself:
$$
\F_0^1 = \Set{F\colon {\R}^n \to {\R}^n}{F-{\rm id} \in C_0^1({\R}^n,{\R}^n)\;\,{\rm and}\;\,F^{-1}\in C^1({\R}^n,{\R}^n)},
$$
where $C_0^1({\R}^n,{\R}^n)$ denotes the space of continuously differentiable 
vector fields that together with their partial derivatives vanish at infinity.
In particular, when equipped with the natural norm, $C_0^1({\R}^n,{\R}^n)$ 
is a Banach space (cf.~\cite[p.~68]{Delfour01}). The following 
proposition lists some fundamental properties of $F[h] = {\rm id} + h$ for 
$h\in\mathcal{B}_d$ with small enough $d>0$. 

\begin{prop}\label{extension} 
There exists $d=d(\Omega) > 0$ such that the following hold:
\begin{itemize}\item[{\rm (a)}] For every $h\in\mathcal{B}_d$, $\partial \Omega_h$
is the boundary of a bounded $C^1$-domain $\Omega_h$, and
the mapping $F[h]$ is a $C^1$-diffeomorphism from ${\partial\Omega}$ onto ${\partial\Omega}_h${\rm ;}
\item[{\rm (b)}] There exists an extension operator $\mathcal{E}\colon \mathcal{B}_d \to C_0^1({\R}^n,{\R}^n)$ such that
$$
\mathcal{E}[h]|_{\partial\Omega} = h, \qquad \norm{\mathcal{E}[h]}_{C^1({\R}^n, {\R}^n)} 
\leq C(\Omega)\norm{h}_{C^1({\partial\Omega},{\R}^n)}
$$
and the extended mapping 
$$
F[\mathcal{E}[h]] = {\rm id} + \mathcal{E}[h]
$$
belongs to $\F_0^1$ for all $h \in\mathcal{B}_d$.
\end{itemize}
\end{prop}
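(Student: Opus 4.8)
The plan is to construct the extension operator $\mathcal{E}$ first, establish that each $F[\mathcal{E}[h]]$ is a global $C^1$-diffeomorphism of $\R^n$ for $h$ in a sufficiently small ball, and then read off both claims: (b) is essentially the construction together with the diffeomorphism property, while (a) follows because a global $C^1$-diffeomorphism carries the bounded $C^\infty$-domain $\Omega$ onto a bounded $C^1$-domain and its boundary onto $\partial\Omega_h$.

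\emph{Construction of $\mathcal{E}$.} Since $\partial\Omega$ is a compact $C^\infty$-manifold, I would cover it by finitely many coordinate charts, fix a subordinate $C^\infty$ partition of unity, and extend $h$ chart-by-chart: in each local flattening of the boundary one extends a $C^1$ function off the boundary hyperplane (e.g.\ by making it constant in the normal direction) and multiplies by a cutoff supported in a fixed tubular neighborhood of $\partial\Omega$. Summing the localized extensions yields a linear operator $\mathcal{E}$ mapping $C^1(\partial\Omega,\R^n)$ into $C_0^1(\R^n,\R^n)$; in fact $\mathcal{E}[h]$ has compact support, so it lies in $C_0^1$ trivially, and $\mathcal{E}[h]|_{\partial\Omega} = h$ by the partition-of-unity property. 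The estimate $\norm{\mathcal{E}[h]}_{C^1(\R^n,\R^n)} \le C(\Omega)\norm{h}_{C^1(\partial\Omega,\R^n)}$ is then just boundedness of this finite sum of local operations, the constant depending only on the atlas, the cutoffs and the partition of unity, i.e.\ only on $\Omega$.

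\emph{Global diffeomorphism.} Writing $g = \mathcal{E}[h]$, the norm bound shows $\norm{Dg}_\infty \le C(\Omega)\norm{h}_{C^1} < C(\Omega) d$, so I would fix $d = d(\Omega)$ small enough that $C(\Omega)d < 1$, forcing $\norm{Dg}_\infty < 1$. Then $F[\mathcal{E}[h]] = {\rm id} + g$ is a bijection of $\R^n$: for each $y \in \R^n$ the map $x \mapsto y - g(x)$ is a contraction (Lipschitz constant $\norm{Dg}_\infty < 1$), so by the Banach fixed-point theorem it has a unique fixed point, which is exactly the unique preimage of $y$. The Jacobian $I + Dg(x)$ is invertible everywhere since $\norm{Dg}_\infty < 1$, so by the inverse function theorem the bijective map $F[\mathcal{E}[h]]$ has a $C^1$ inverse. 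Hence $F[\mathcal{E}[h]] - {\rm id} = g \in C_0^1(\R^n,\R^n)$ and $F[\mathcal{E}[h]]^{-1} \in C^1(\R^n,\R^n)$, i.e.\ $F[\mathcal{E}[h]] \in \F_0^1$, which together with the properties of $\mathcal{E}$ established above proves (b).

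\emph{Deducing (a).} Since $\mathcal{E}[h]|_{\partial\Omega} = h$, the global diffeomorphism $F[\mathcal{E}[h]]$ restricts to $F[h]$ on $\partial\Omega$; being a diffeomorphism of $\R^n$, it maps $\partial\Omega$ $C^1$-diffeomorphically onto its image $\partial\Omega_h = F[h](\partial\Omega)$, giving the last assertion of (a). Likewise it carries the bounded domain $\Omega$ onto the bounded open set $\Omega_h := F[\mathcal{E}[h]](\Omega)$, whose topological boundary is $F[\mathcal{E}[h]](\partial\Omega) = \partial\Omega_h$; because the image of a $C^\infty$- (in particular $C^1$-) boundary under a global $C^1$-diffeomorphism is again a $C^1$-boundary, $\Omega_h$ is a bounded $C^1$-domain. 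The main work --- and the one place to be careful --- is the passage from the local to the global diffeomorphism: injectivity and surjectivity on all of $\R^n$ do not follow from the pointwise invertibility of the Jacobian alone, and it is precisely the smallness of $d$ (hence the contraction estimate $\norm{Dg}_\infty < 1$, available because $g$ is a compactly supported extension whose $C^1$-norm is controlled by $\norm{h}_{C^1}$) that makes the Banach fixed-point argument go through and thus upgrades the local statement to a genuine diffeomorphism of $\R^n$.
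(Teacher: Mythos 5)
Your proof is correct, and for part (b) it follows essentially the same route the paper takes: the paper's proof simply cites the tubular-neighborhood extension plus the perturbation-of-identity argument on p.~78 of Delfour--Zol\'esio, which is exactly the contraction/fixed-point argument you spell out (extend $h$ with a cutoff supported near $\partial\Omega$, shrink $d$ so that $\norm{D\mathcal{E}[h]}_\infty<1$, conclude global bijectivity from Banach's fixed-point theorem and $C^1$-invertibility from the inverse function theorem). Where you genuinely diverge is part (a): the paper proves it directly and intrinsically, by applying the implicit function theorem in local coordinates on $\partial\Omega$, without invoking any extension, whereas you obtain it as a corollary of the global diffeomorphism constructed for (b). Your route is perfectly valid and arguably more economical, since one piece of work yields both claims; the paper's route has the small advantage that the statement of (a) --- which refers only to $h$ on $\partial\Omega$ and not to any extension --- is proved in terms of those data alone. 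If you keep your ordering, it is worth one remark that your $\Omega_h:=F[\mathcal{E}[h]](\Omega)$ is independent of the chosen extension: $\partial\Omega_h=F[h](\partial\Omega)$ is a compact $C^1$-hypersurface determined by $h|_{\partial\Omega}$, and $\Omega_h$ is necessarily its bounded complementary component, so no ambiguity arises. (A second cosmetic point: to make $I+Dg(x)$ invertible and the fixed-point map a contraction you need the operator norm of $Dg$ below one, which costs only a dimensional constant in the choice of $d$; you implicitly absorb this, which is fine.)
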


\begin{proof}
The first part of the claim follows from an application of the implicit
function theorem in local coordinates on $\partial \Omega$. The second 
part can be deduced, e.g., 
by first forcing $h$ to zero in a tubular neighborhood of $\partial \Omega$
and then using similar arguments as on page 78 of \cite{Delfour01}.
\end{proof} 

%\begin{proof}
%(a) Follows from an application of the technique used in ({\tt CITE}) in local coordinates. 
%(b) Let $T({\partial\Omega},\delta)\subset {\R}^n$ be a tubular neighborhood of ${\partial\Omega}$ with radius $\delta>0$ satisfying the requirements of the tubular neighborhood theorem ({\tt CITE}). By the theorem says there exists smooth a parametrization of  $T({\partial\Omega},\delta)$ such that $y\in T({\partial\Omega},\delta)$ can be given as
%$$
%y = y(x,r) = x + r\nu(x),\qquad x\in{\partial\Omega},\quad r\in(-\delta,\delta).
%$$
% Choose a smooth bump function $f\colon [-\delta,\delta] \to [0,1]$ such that 
%$$
%f(\pm\delta) = 0, \qquad f(0)=1.
%$$ 
%We define 
%$$
%\tilde{h}(y) = \begin{cases} f(r)h(x), & \mbox{if } y\in T({\partial\Omega},\delta) \\ 0, & \mbox{elsewhere} \end{cases}
%$$
%whence it also holds that $\tilde{h}|_{\partial\Omega} = h$. Let $x=x(s,t)$ be a coordinate chart in ${\partial\Omega}$. Then 
%$$
%\tilde{h}(y(x(s,t),r)) = f(r)h(x(s,t))
%$$ 
%and by taking each partial derivative we see that by the compactness of the boundary ${\partial\Omega}$ that 
%\begin{equation}\label{btilde}
%\norm{\tilde{h}}_{C^1_0({\R}^n,{\R}^n)} \leq C\norm{h}_{C^1({\partial\Omega},{\R}^n)}
%\end{equation}
%with a constant $C>0$ independent of $h$. Using same technique as in (a) we find out that there exists $d>0$ such that for $h\in\mathcal{B}_d\subset C^1({\partial\Omega},{\R}^n)$ the map ${\rm id} + \tilde{h}$ belongs to $\F_0^1$. For such a $d>0$ we define $E[h] = \tilde{h}$ and the proof is completed.
%\end{proof}
If there is no danger of a confusion, we abuse the notation 
by denoting the extensions $\mathcal{E}[h]$ and $F[\mathcal{E}[h]]$ by the original symbols $h$ 
and $F[h]$, respectively. Moreover, we assume implicitly that $d > 0$ is as
introduced in Proposition~\ref{extension}.

Obviously, the measurement operator of the CEM may be considered as a map from 
$\mathcal{B}_d \times {\C}^M_\diamond$ to ${\C}^M/{\C}$, i.e., 
\begin{equation}
\label{memap}
R: (h, I) \mapsto U[h], 
\end{equation}
where $(u[h],U[h])\in {\mathcal{H}^1}(\Omega_h)$ is the unique solution of \eqref{cemeqs} 
when $\Omega$ is replaced by $\Omega_h$ and the
electrodes $E_m$ by $E_m^h := F[h](E_m) \subset \partial \Omega_h$, 
$m= 1, \dots, M$.
To make this definition unambiguous and to simplify the analysis that follows,
we assume that 
$\sigma \in C^\infty({\R}^n, {\C}^{n\times n})$ with the bounds \eqref{sigma}
satisfied everywhere in $\R^n$, i.e., that the admittivity 
distribution is defined in everywhere in ${\R}^n$ --- or at least in some proper
neighborhood of $\Omega$. As a further simplification, we also assume that 
(in the three-dimensional case) the 
electrode boundaries $\partial E_m$, $m=1, \dots, M$, are smooth curves.

%Alternatively we could assume that $\sigma\in C^\infty(N_{d+\epsilon}(\Omega);{\C}^{n\times n})$ for a certain $\epsilon > 0$, 
%where for a $r\geq 0$,    
%$N_{r}(\Omega) = \Set{x\in{\R}^n}{\inf_{y\in\Omega}|x-y| \leq r}$. Obviously these 
%two assumptions give the same results 
%due to smooth extensions. In what follows we implicitly assume that the admittivity satisfies this condition.

We denote by $h_\tau$ and $h_\nu$ the tangential 
(vector) and normal (scalar) components of $h\in\mathcal{B}_d$, respectively, 
that is, we have the (unique) decomposition $h = h_\tau + h_\nu\nu$. One
might expect that it is enough to consider 
perturbations that belong to the normal bundle of the boundary, i.e.,
ones that have vanishing tangential components. However, this turns out to
be a false intuition, because tangential vector fields typically affect
the measurement map in the `first order' by moving the
electrodes (cf.~\cite{Darde12}) --- even though they only define `second order' 
perturbations of the object boundary $\partial \Omega$ itself.

\begin{theorem}
\label{diffbound}
%Assume that $\sigma\in C^\infty({\R}^n,{\C}^{n\times n})$. 
Under the above assumptions, the operator 
$R\colon  \mathcal{B}_d \times {\C}^M_\diamond \to {\C}^M/{\C}$ is Fr\'echet differentiable at 
the origin with respect to the first variable, i.e., there exists a
bounded bilinear operator $R': C^1({\partial\Omega},{\R}^n) \times {\C}^M_\diamond \to {\C}^M/{\C}$ such that 
$$
\lim_{h\to 0}\frac{1}{\| h \|_{C^1(\partial \Omega,{\R}^n)}} 
\|R[h] - R[0] - R'h \|_{\mathcal{L}({\C}^M_\diamond, {\C}^M/{\C})} = 0
$$
in $C^1(\partial \Omega, {\R}^n)$.
\end{theorem}

In the following, we will prove Theorem \ref{diffbound} in three dimensions, 
i.e.~for $n=3$,
which is the more challenging case. The two-dimensional counterpart can be 
obtained by following a similar line of reasoning.

The derivative $R'$ of Theorem \ref{diffbound} can, in fact, be given 
explicitly.
To this end, let $H \in C^\infty({\partial\Omega})$ be the mean curvature function defined
so that it is positive if the surface turns away from the exterior unit normal,
and consider the bounded surface divergence operator (cf., e.g., 
\cite{Colton92})
\begin{equation}
\label{Div}
{\rm Div}\colon [H^s({\partial\Omega})]^n_\tau \to H^{s-1}({\partial\Omega}), \qquad s \in {\R},
\end{equation}
with the weak definition
$$
\langle {\rm Div} \, v, \varphi \rangle_{\partial\Omega} = 
- \langle v,  {\rm Grad} \,\varphi  \rangle_{\partial\Omega},  \qquad \varphi \in 
C^{\infty}(\partial \Omega), 
$$ 
where $\text{Grad}$ denotes the surface gradient (cf., e.g., \cite{Delfour01}).
We also introduce a family of distributions $\{ \delta_m \}_{m=1}^M \subset
H^{-1/2 - \epsilon}({\partial\Omega})$, $\epsilon > 0$, defined through
$$
\langle \delta_{m}, v \rangle_{\partial \Omega} = \int_{{\partial E}_m} v\,ds,
\qquad v \in H^{1/2 + \epsilon}({\partial\Omega}),
$$
for $m=1, \dots , M$. Notice that any $v \in  H^{1/2 + \epsilon}({\partial\Omega})$, $\epsilon > 0$, 
has 
a well defined restriction $v|_{{\partial E}} \in  H^{\epsilon}({\partial E})$ due to 
the trace theorem, and thus the definition of 
the family $\{ \delta_m \}_{m=1}^M$ is unambiguous.
Moreover, we denote the characteristic function of $E_m \subset \partial \Omega$
by $\chi_m$, $m=1, \dots, M$, and the unit exterior normal of ${\partial E}$ in the 
tangent bundle of ${\partial\Omega}$ by $\nu_{{\partial E}}$.

With these tools in hand, let us consider the boundary value problem 
\begin{equation}
\label{bndeqs} 
\begin{array}{ll} 
\nabla\cdot\sigma\nabla u' = 0 & \quad  {\rm in}\ \Omega, \\[1mm] 
{\displaystyle \nu\cdot \sigma \nabla u' - \sum_{m=1}^M \frac{1}{z_m} (U' - u') \chi_m 
= f_1 + \sum_{m=1}^m \frac{1}{z_m}( f_2 \chi_m + f_3 \delta_m)} & \quad {\rm on} \ {\partial\Omega}, 
\\[1mm] \displaystyle{\int_{E_m} (U'_m - u') \, dS = - \int_{E_m} f_2 \,dS -
\int_{\partial E_m} f_3 \,ds}, & 
\quad  m=1,\ldots, M. 
\end{array}
\end{equation} 
Here, the inputs $f_1 \in  H^{-1/2-\epsilon}(\partial \Omega)$, $f_2
\in H^{1/2 -\epsilon}(E)$ and $f_3 \in H^{1-\epsilon}(\partial E)$ 
are defined with the help of $(u,U)\in \mathcal{H}^{2-\epsilon}(\Omega)$, i.e., 
the unperturbed solution of (\ref{cemeqs}):
\begin{align*}
f_1  &=  \, {\rm Div}(h_\nu(\sigma\nabla u|_{\partial \Omega})_\tau), \\[1mm]
f_2|_{E_m} &= \, h_\nu \Big( (n-1) H (U_m-u) - \frac{\partial u}{\partial\nu} \Big)\Big|_{E_m}, \\[1mm]
f_3|_{\partial E_m} &= \, (h \cdot \nu_{\partial E}) (U_m - u)|_{\partial E_m}.
\end{align*}
Notice that the claimed regularity of $f_1$, $f_2$ and $f_3$ follows 
from (\ref{Div}) and (consecutive) applications of the trace theorem.
It turns out that problem (\ref{bndeqs}) is uniquely solvable in 
$\mathcal{H}^{1-\epsilon}(\Omega)$, and that the corresponding solution defines the 
Fr\'echet derivative of Theorem \ref{diffbound}.

\begin{theorem}
\label{main}
Under the assumptions of Theorem \ref{diffbound}, the boundary value
problem \eqref{bndeqs} has a unique solution $(u'[h], U'[h])\in 
\mathcal{H}^{1-\epsilon}(\Omega)$, $\epsilon > 0$, for any
$h \in C^1({\partial\Omega},{\R}^n)$. Moreover, the Fr\'echet derivative
of Theorem \ref{diffbound}, i.e., $R':  C^1({\partial\Omega},{\R}^n) \times 
{\C}^M_\diamond \to {\C}^M/{\C}$, is given by
$$
R': (h, I) \mapsto U'[h].
$$
\end{theorem}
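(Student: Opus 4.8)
The plan is to prove the two assertions of Theorem~\ref{main} in turn: first the unique solvability of \eqref{bndeqs} in $\Hoe(\roi)$, and then the identification of the derivative $R'$ supplied by Theorem~\ref{diffbound} with the map $(h,I)\mapsto U'[h]$. For the solvability I would begin from a variational reformulation. Testing the interior equation against a pair $(v,V)\in\mathcal{H}^{1+\eps}(\roi)$, applying Green's identity, substituting the Neumann-type condition in the second line of \eqref{bndeqs}, and eliminating the electrode integrals by means of the constraints in the third line, one is led to the problem of finding $(u',U')\in\Hoe(\roi)$ with
\[
\ses{(u',U'),(v,V)} = \dual{f_1,\ov v}_{\doo\roi} + \sum_{m=1}^M \frac{1}{z_m}\Big[\int_{E_m} f_2(\ov v-\ov V_m)\,dS + \int_{\doo E_m} f_3(\ov v-\ov V_m)\,ds\Big]
\]
for all $(v,V)\in\mathcal{H}^{1+\eps}(\roi)$. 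The test-space regularity $\mathcal{H}^{1+\eps}(\roi)$ is dictated by the edge integrals against $f_3$: the trace of $v$ on the curves $\doo E_m$ is controlled by $\norm{v}_{H^{1/2+\eps}(\doo\roi)}$, and with the regularity of $f_1,f_2,f_3$ recorded after \eqref{bndeqs} the right-hand side defines a bounded antilinear functional $\ell$ on $\mathcal{H}^{1+\eps}(\roi)$.

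To solve this weak problem I would use a transposition (duality) argument resting on the interior ellipticity of $\delsg$ and the coercivity of $B$ recorded in Section~\ref{sec:CEM}. The essential input is a two-derivative elliptic shift for the CEM: the solution operator associated with $B$ maps data in $\Hoe(\roi)'$ into solutions in $\mathcal{H}^{1+\eps}(\roi)$, which is exactly the regularity statement underlying the $H^{2-\eps}$-result recalled at the end of Section~\ref{sec:CEM}, the threshold $\eps>0$ reflecting the edge singularities at $\doo E$. Granting this shift for $B$ and its (equally elliptic) adjoint form, duality produces, for the given $\ell\in\mathcal{H}^{1+\eps}(\roi)'$, a unique $(u',U')\in\Hoe(\roi)$ satisfying the displayed identity; a density argument then shows that it solves \eqref{bndeqs} in the distributional sense. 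I expect this regularity/isomorphism statement to be the main obstacle: establishing the sharp two-derivative shift for the mixed Robin--Neumann problem with a jump across $\doo E$, and tracking the exact $\eps$-loss, is where the real work lies, and is the point at which the guidelines of \cite{Darde12} are indispensable.

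For the identification of $R'$, I would exploit that Theorem~\ref{diffbound} already guarantees existence of the Fr\'echet derivative, so it suffices to compute the directional derivative $\frac{d}{dt}R[th]\big|_{t=0}$ for each fixed $h$ and check that it equals $U'[h]$. First I would pull the perturbed forward problem on $\roi_{th}$ back to the fixed domain $\roi$ via $\Phi_t:=F[th]$: writing $\hat u_t=u[th]\circ\Phi_t$, the change of variables turns the weak form \eqref{perus} into a coercive $\Ho(\roi)$-problem with transported admittivity $\sg_{t}=|\det D\Phi_t|\,(D\Phi_t)^{-1}(\sg\circ\Phi_t)(D\Phi_t)^{-\mathrm{T}}$ and a tangential-Jacobian weight on the electrodes, while the electrode potentials $U[th]$ are untouched by the pull-back. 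Since these coefficients depend smoothly on $t$ and coercivity persists for small $t$, the solution is differentiable in $t$; differentiating at $t=0$ yields a well-posed $\Ho(\roi)$-problem for the material derivative $(\dot u,\dot U)$, whose data involve $\dot\sg\,\del u$ and the first variation ${\rm Div}\,h_\tau+(n-1)Hh_\nu$ of the electrode surface measure.

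The final and more delicate step is to pass to the shape derivative $u':=\dot u-(h\cdot\del)u$, with $U'=\dot U$ since the electrode potentials are spatially constant. Substituting $\dot u=u'+(h\cdot\del)u$, using $\delsg u=0$, decomposing $h=h_\tau+h_\nu\nu$, and integrating the tangential-divergence contributions by parts over each $E_m$, the interior and surface terms reorganize precisely into the data $f_1={\rm Div}(h_\nu(\sg\del u|_{\doo\roi})_\tau)$, $f_2$ and $f_3$ of \eqref{bndeqs}; in particular, the boundary terms generated on the edges $\doo E_m$ by this integration by parts are exactly the line integrals encoded by the distributions $\delta_m$, which explains why the material derivative lives in $\Ho(\roi)$ while the shape derivative drops to $\Hoe(\roi)$. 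Matching the resulting weak problem with the one solved in the first part, uniqueness gives $\dot U=U'[h]$, hence $R'h=U'[h]$. The secondary obstacle here is purely technical: carrying out the material-to-shape conversion and the integration by parts on the curved edges $\doo E_m$ carefully enough that the edge contributions assemble into $f_3\delta_m$ with the correct geometric factor $h\cdot\nu_{\doo E}$ and mean-curvature term.
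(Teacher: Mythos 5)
Your identification of $R'$ follows essentially the same route as the paper: pull the perturbed problem back to $\Omega$, differentiate the coercive variational problem in the perturbation parameter to get the material derivative $(\dot u,\dot U)\in\mathcal{H}^1(\Omega)$ (this is the pair $(w[h],W[h])$ of \eqref{varprob}), and then pass to the shape derivative $u'=\dot u-h\cdot\nabla u$, $U'=\dot U$, reorganizing the boundary terms into $f_1,f_2,f_3$. Two remarks on this part. First, the paper proves Theorems \ref{diffbound} and \ref{main} jointly, with the quantitative estimate of Proposition \ref{hderivative} delivering Fr\'echet (not merely Gateaux) differentiability; if you take Theorem \ref{diffbound} as given, your directional-derivative shortcut is legitimate, but you should be aware that the estimate you invoke for ``differentiability in $t$'' is the same one that proves Theorem \ref{diffbound}, so nothing is really saved. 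Second, you gloss over a genuine technical point: the conversion $\dot u\mapsto \dot u-h\cdot\nabla u$ requires testing $B$ against quantities built from $\nabla u$, and since $u$ is only in $H^{2-\epsilon}(\Omega)$ these manipulations (Green's formula, Neumann traces of $H^{1-\epsilon}$-solutions) cannot be performed directly; the paper inserts a sequence of smooth approximants $(u^{(j)},U^{(j)})$ satisfying \eqref{jlimit} and passes to the limit using the continuity of $\nabla\cdot\sigma\nabla$ on $H^{1-\epsilon}/\C$ and of the Neumann trace on the kernel of $\nabla\cdot\sigma\nabla$. Without some such regularization your edge-integration-by-parts is formal.

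The more serious gap is in your solvability argument. You propose a transposition scheme whose engine is a two-derivative elliptic shift: that the (adjoint) CEM solution operator maps data in $\mathcal{H}^{1-\epsilon}(\Omega)'$ into $\mathcal{H}^{1+\epsilon}(\Omega)$. This is not the regularity statement recalled at the end of Section \ref{sec:CEM} --- that statement concerns the specific right-hand side generated by a current pattern $I\in\C^M_\diamond$, not arbitrary functionals on $\mathcal{H}^{1-\epsilon}(\Omega)$, which may be badly behaved near $\partial\Omega$ and near the edges $\partial E$ where the mixed Robin/Neumann conditions limit regularity. You correctly flag this isomorphism as ``the main obstacle,'' but you do not supply it, and the paper never needs it: existence is obtained \emph{constructively}, by exhibiting the explicit candidate $\tilde w[h]=w[h]-h\cdot\nabla u$ with $w[h]$ the Lax--Milgram solution of the ordinary coercive $\mathcal{H}^1(\Omega)$-problem \eqref{varprob}, and verifying via the limit \eqref{wtiljtowtil} that it satisfies \eqref{bndeqs}; uniqueness is reduced to the argument of \cite[Theorem 6.1]{Darde12} (and the splitting into normal and tangential perturbations, plus rescaling outside $\mathcal{B}_d$, handles general $h$ by linearity of the data in $h$). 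Note also that your two parts are partly redundant: if your second part is carried out rigorously it already produces a solution of \eqref{bndeqs}, so only uniqueness would remain --- and for that the transposition machinery is overkill compared with the direct argument the paper borrows from \cite{Darde12}.
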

At first sight it may seem that Theorem \ref{main} is not very practical 
as it defines the Fr\'echet derivative of $R$ with the help of a
 boundary value problem that falls outside the $H^1$-based variational theory. 
Fortunately, there also exists a dual approach for sampling the shape
derivative.
\begin{corollary}\label{sample} Let $(\tilde{u} ,\tilde{U}) \in 
\mathcal{H}^{2-\epsilon}(\Omega)$, $\epsilon > 0$, be 
the solution of \eqref{cemeqs} for some electrode current pattern 
$\tilde{I}\in{\C}_\diamond^M$. Then, for any 
$(h,I)\in C^1({\partial\Omega},{\R}^n) \times {\C}_\diamond^M$ it holds that 
\begin{align}
\label{sampling}
\sum_{m=1}^M(R'(h,I))_m\tilde{I}_m = &-\int_{{\partial\Omega}}h_\nu (\sigma\nabla u)_\tau \cdot
(\nabla\tilde{u})_\tau \, dS \\
& - \sum_{m=1}^M\frac{1}{z_m}\int_{E_m}h_\nu\Big( (n-1)(U_m-u)H - \frac{\partial u}{\partial\nu}\Big)(\tilde{U}_m-\tilde{u}) \, dS \nonumber \\
& - \sum_{m=1}^M \frac{1}{z_m}\int_{\partial E_m}(h \cdot \nu_{\partial E})(U_m-u)
(\tilde{U}_m-\tilde{u}) \,ds, \nonumber
\end{align}
where $(u,U) \in \mathcal{H}^{2-\epsilon}(\Omega)$ is the solution of 
\eqref{cemeqs}.
\end{corollary}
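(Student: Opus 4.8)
The plan is to read \eqref{sampling} as a reciprocity identity between the derivative solution $(u',U')$ and the regular background solution $(\tilde u,\tilde U)$, and to obtain it by testing the variational form of the derivative problem against $(\tilde u,\tilde U)$ while exploiting the symmetry of $B$ induced by $\sigma=\sigma^{\rm T}$. As a preliminary step I would record the weak formulation of \eqref{bndeqs} underlying the proof of Theorem~\ref{main}: multiplying the interior equation by a conjugated test potential $\bar v$, integrating by parts, and inserting the boundary condition together with the $M$ integral constraints yields
\[
\ses{(u',U'),(v,V)} = \langle f_1,\bar v\rangle_{\partial\Omega} - \sum_{m=1}^M \frac{1}{z_m}\int_{E_m} f_2\,(\overline{V}_m - \bar v)\,dS - \sum_{m=1}^M \frac{1}{z_m}\int_{\partial E_m} f_3\,(\overline{V}_m - \bar v)\,ds
\]
for all sufficiently regular $(v,V)$; here the integral constraints are precisely what turn the bare boundary term $\langle \nu\cdot\sigma\nabla u',\bar v\rangle$ into the full form $B$ by supplying the missing $\overline{V}_m$ contributions.

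Next I would insert $(v,V)=(\bar{\tilde u},\bar{\tilde U})$, which is admissible because $(\tilde u,\tilde U)\in\mathcal{H}^{2-\epsilon}(\Omega)$ carries far more regularity than the dual exponent needed to pair with $f_1\in H^{-1/2-\epsilon}(\partial\Omega)$ and $f_3\in H^{1-\epsilon}(\partial E)$; the conjugation is arranged so that $\bar v=\tilde u$ and $\overline{V}_m=\tilde U_m$, removing all bars on the right. Using the identity $\ses{(a,A),(b,B)}=\ses{(\bar b,\bar B),(\bar a,\bar A)}$, which follows termwise from $\sigma=\sigma^{\rm T}$, the left-hand side becomes $\ses{(\tilde u,\tilde U),(\bar{u'},\bar{U'})}$, and feeding $(\bar{u'},\bar{U'})$ into the weak CEM identity \eqref{perus} for $(\tilde u,\tilde U)$ with current $\tilde I$ collapses it to $\sum_m \tilde I_m U'_m$. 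Since $U'=R'(h,I)$ by Theorem~\ref{main} and $\tilde I\in{\C}^M_\diamond$, this equals the pairing $\sum_m (R'(h,I))_m\tilde I_m$, which is well defined on the quotient ${\C}^M/{\C}$ because $\tilde I$ sums to zero. It then remains to substitute the explicit $f_1,f_2,f_3$ and to move the surface divergence in $f_1$ onto $\tilde u$ through the weak definition of ${\rm Div}$, so that $\langle{\rm Div}(h_\nu(\sigma\nabla u)_\tau),\tilde u\rangle_{\partial\Omega}=-\int_{\partial\Omega} h_\nu(\sigma\nabla u)_\tau\cdot({\rm Grad}\,\tilde u)\,dS$ with ${\rm Grad}\,\tilde u=(\nabla\tilde u)_\tau$; the three resulting terms are exactly those of \eqref{sampling}, in order.

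I expect the genuine obstacle to be functional-analytic rather than algebraic, namely justifying every pairing at the available regularity. Three points need care. First, \eqref{perus} is posed for test functions in $\mathcal{H}^1(\Omega)$, whereas I test it against $(\bar{u'},\bar{U'})\in\mathcal{H}^{1-\epsilon}(\Omega)$; I would extend it by density, using that $\ses{(\tilde u,\tilde U),\cdot}$ is bounded on $\mathcal{H}^{1-\epsilon}(\Omega)$ thanks to the extra smoothness $\nabla\tilde u\in H^{1-\epsilon}(\Omega)$, so that $B$ acts as a bounded pairing on $\mathcal{H}^{1-\epsilon}(\Omega)\times\mathcal{H}^{1+\epsilon}(\Omega)$ with $(\tilde u,\tilde U)\in\mathcal{H}^{2-\epsilon}(\Omega)$ lying in the regular factor. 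Second, the symmetry identity must be verified on this mismatched pairing and not merely on $\mathcal{H}^1(\Omega)\times\mathcal{H}^1(\Omega)$. Third, the weak definition of ${\rm Div}$ in \eqref{Div} is stated only for $\varphi\in C^\infty(\partial\Omega)$, so the integration by parts against $\tilde u|_{\partial\Omega}\in H^{3/2-\epsilon}(\partial\Omega)$ must be justified by density of $C^\infty(\partial\Omega)$ and continuity of ${\rm Div}\colon[H^{1/2-\epsilon}(\partial\Omega)]^n_\tau\to H^{-1/2-\epsilon}(\partial\Omega)$; the resulting surface integral converges absolutely since both $(\sigma\nabla u)_\tau$ and $(\nabla\tilde u)_\tau$ lie in $H^{1/2-\epsilon}(\partial\Omega)\subset L^2(\partial\Omega)$. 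Once these density and duality steps are secured, \eqref{sampling} follows by reading off the three terms.
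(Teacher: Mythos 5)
Your proposal is correct, and its core coincides with the paper's: both proofs hinge on the reciprocity step that converts $\sum_m(R'(h,I))_m\tilde I_m$ into the sesquilinear form $B$ applied to the derivative pair and $(\overline{\tilde u},\overline{\tilde U})$, using the symmetry of $B$ inherited from $\sigma=\sigma^{\rm T}$ together with \eqref{perus} for the current pattern $\tilde I$. Where you diverge is in the reduction and in how the resulting quantity is evaluated. The paper first restricts to normal perturbations $h=h_\nu\nu\in\mathcal{B}_d$ (dispatching the tangential part by linearity and a citation of \cite{Darde12}), and it never forms the weak formulation of \eqref{bndeqs} with the distributional data $f_1$, $f_3$ paired against $\tilde u$: instead it reuses the $j$-indexed identity \eqref{vareq}/\eqref{varformula} from the proof of Theorem~\ref{main}, working throughout with the smooth approximants $(\tilde w^{(j)}[h],W^{(j)}[h])$ and with smooth approximations $\varphi_j$ of $\tilde u$, so that every pairing is a classical integral and the low-regularity issues are confined to passing to the limit. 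You instead take Theorem~\ref{main} as a black box, write down the weak form of the limit problem \eqref{bndeqs} for a general $h$, test it with $(\overline{\tilde u},\overline{\tilde U})$, and transfer ${\rm Div}$ onto $\tilde u$; this treats normal and tangential components uniformly and makes the algebra transparent (your weak form is indeed the correct one: the $M$ integral constraints supply exactly the $\overline V_m$ contributions needed to assemble $B$), at the price of having to justify the generalized Green's formula for $u'\in H^{1-\epsilon}(\Omega)$ --- the Neumann trace exists in $H^{-1/2-\epsilon}(\partial\Omega)$ only because $\nabla\cdot\sigma\nabla u'=0$, via \cite[Chapter 2, Theorem 7.3]{Lions72} and \cite{Dautray88} --- as well as the dual pairings $\langle f_1,\tilde u\rangle_{\partial\Omega}$ and $\langle\delta_m,\cdot\rangle_{\partial\Omega}$ at the available regularity, points you correctly flag. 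Both routes are sound; the paper's avoids distributional pairings altogether, while yours is shorter and dispenses with the normal/tangential case split.
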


\section{Proof of the main result}
\label{sec:Proof}
Before moving on to prove Theorems \ref{diffbound} and \ref{main} and Corollary \ref{sample}, we give a brief summary of the variational technique on which the proof is based. A more complete reasoning in a slightly different framework can be found in \cite[Section 5.1]{Darde12}. 

Let $F = F[h] = {\rm id} + h \in \F_0^1$, with $h \in \B_d$, be as in the previous section.
We introduce a pullback operator $F^\ast \colon H^1(\Omega_h) \to H^1(\Omega)$ defined by $F^\ast v = v\0 F|_{\Omega}$; it is easy to see that $F^\ast$ is a linear 
isomorphism. A simple change of variables
applied to the variational equation defining $(u[h], U[h]) \in \mathcal{H}^1(\Omega_h)$, cf.~\eqref{memap}, shows that
the difference of the pullback pair $(F^*u[h], U[h]) \in {\mathcal{H}^1}(\Omega)$ 
and the unperturbed solution $(u,U) = (u[0], U[0]) \in {\mathcal{H}^1}(\Omega)$ satisfies (cf.,~e.g.,~\cite{Hyvonen10})
\begin{align} 
B\{ (F^\ast u[h] - u, & \, U[h] - U),(v,V) \} \nonumber \\[2pt] 
= & \, \int_\Omega \pa{ \sigma - \sigma^\ast[h]}\nabla (F^\ast u[h])\cdot \nabla\overline{v} \,dx \nonumber \\[2pt] 
& + \sum_{m=1}^M\frac{1}{z_m}\int_{E_m} (U_m[h] - F^\ast u[h])(\overline{V}_m - \overline{v})(1 - |{\rm Jac}\,F|) \, dS \label{diffvar} 
\end{align}
for all $(v,V)\in \mathcal{H}^1(\Omega)$. Here, the pullback admittivity 
$\sigma^\ast[h]$ is defined as
$$
\sigma^\ast[h] = \abs{J_F}J_F^{-1} (F^\ast \sigma)(J_F^{-1})^{\rm T},
$$
$J_F$ is the Jacobian matrix of $F$, $|J_F|$ is the absolute value of its determinant, and ${\rm Jac}\,F$ is the surface Jacobian determinant of the restriction $F|_{\partial \Omega} : \partial \Omega \to
\partial \Omega_h$.
%Let us analyse this relation with respect to normal perturbations, so to speak, in what follows assume that $h\in \mathcal{B}_d \cap \Ne({\partial\Omega})$ and $F = F[h]$ denotes the diffeomorphism ${\rm id} + h$ as in Proposition \ref{extension} where $h$ must be understood as the extension $E[h]$. Clearly the extension is not unique, so in the following analysis many quantities depend on the extension.  Since the tangential component $h_\tau \equiv 0$ on ${\partial\Omega}$, we have 
Moreover, it follows from the perturbation analysis in \cite{Hettlich95,Hettlich98} that
modulo $O (\norm{h}_{C^1({\R}^n, {\R}^n)}^2)$ it holds that
\begin{align}
\sigma - \sigma^\ast[h] &= \, \sigma J_h^{\rm T} + J_h \sigma - (h\cdot \nabla + \nabla \cdot h)\sigma,  \\[4pt]
1 - |{\rm Jac}\,F| &= \, - (n-1)Hh_\nu - {\rm Div} \, h_\tau,
\end{align}
where $h\cdot \nabla \sigma$ is defined as the matrix $(h\cdot \nabla\sigma_{ij})_{i,j = 1}^n$. In consequence, in order to estimate the difference 
$(F^\ast u[h] - u,  U[h] - U)$, it seems reasonable to consider the bounded 
sesquilinear functional $\Lambda: C^1({\R}^n, {\R}^n) \times {\mathcal{H}^1}(\Omega) \to {\C}$ defined 
by (cf.~\cite[(19)]{Darde12})
\begin{equation*}\begin{split}
\Lambda[h](v,V) & = \int_\Omega \pa{\sigma J_h^{\rm T} + J_h \sigma - (h\cdot\nabla + \nabla\cdot h)\sigma}\nabla u\cdot \nabla\overline{v}\, dx \\
&\qquad - \sum_{m=1}^M\frac{1}{z_m}\int_{E_m} (U_m-u)(\overline{V}_m-\overline{v})((n-1)Hh_\nu + {\rm Div} \, h_\tau) \, dS, 
\end{split}\end{equation*}
and the corresponding $h$-parametrized variational problem
\begin{equation}\label{varprob}
B\!\left\lbrace(w[h],W[h]),(v,V)\right\rbrace = \Lambda[h](v,V) \qquad {\rm for} \ {\rm all} \  (v,V)\in\mathcal{H}^1(\Omega),
\end{equation}
which has a unique solution in $\mathcal{H}^1(\Omega)$ due to the Lax--Milgram lemma.
The following proposition is a straightforward variation of 
\cite[Proposition 5.4]{Darde12}.
\begin{prop}
\label{hderivative}
Let $(u,U) \in {\mathcal{H}^1}(\Omega)$ and $(u[h], U[h]) \in\mathcal{H}^1(\Omega_h)$ be as defined in
Section \ref{sec:Shape} and $(w[h],W[h])\in \mathcal{H}^1(\Omega)$ the unique solution of \eqref{varprob}. Then, the estimate
$$
\norm{(F^\ast u[h] - u, U[h] - U) - (w[h],W[h])}_{\mathcal{H}^1(\Omega)} \leq C|I|\norm{h}_{C^1(\partial \Omega, {\R}^n)}^2
$$
holds with a constant $C>0$ that can be chosen independently of $I\in {\C}_\diamond^M$ and $h\in \mathcal{B}_d$.
\end{prop}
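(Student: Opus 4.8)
The plan is to subtract the variational identity (\ref{varprob}) defining $(w[h],W[h])$ from the exact difference equation (\ref{diffvar}), and then to control the resulting right-hand side by means of the coercivity of $B$. Denoting by $\mathcal{R}[h](v,V)$ the right-hand side of (\ref{diffvar}) and setting $e[h] = (F^\ast u[h] - u - w[h],\, U[h] - U - W[h]) \in \mathcal{H}^1(\Omega)$, the two identities combine into
\[
B\{ e[h], (v,V) \} = \mathcal{R}[h](v,V) - \Lambda[h](v,V), \qquad (v,V) \in \mathcal{H}^1(\Omega).
\]
Because $\re B\{(v,V),(v,V)\} \ge C_2 \norm{(v,V)}^2_{\mathcal{H}^1(\Omega)}$, testing with $(v,V) = e[h]$ reduces the whole claim to the single estimate $|\mathcal{R}[h](v,V) - \Lambda[h](v,V)| \le C|I|\,\norm{h}^2_{C^1(\partial\Omega,\mathbb{R}^n)}\,\norm{(v,V)}_{\mathcal{H}^1(\Omega)}$.

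Before attacking that estimate I would record two auxiliary bounds, both uniform in $h \in \mathcal{B}_d$ and $I \in \mathbb{C}^M_\diamond$. First, applying the solvability estimate (\ref{bbound}) on the perturbed domain $\Omega_h$ --- which is legitimate since (\ref{sigma}) holds throughout $\mathbb{R}^n$ and the moved electrodes $E_m^h$ still obey the measure bound (\ref{e_ehto}) for small $d$ --- and combining it with the uniform boundedness of the pullback $F^\ast$ and of $(F^\ast)^{-1}$ on $H^1$ yields $\norm{F^\ast u[h]}_{H^1(\Omega)} \le C|I|$. Second, since every coefficient appearing in $\mathcal{R}[h]$ (namely $\sigma - \sigma^\ast[h]$ and $1 - |{\rm Jac}\,F|$) is $O(\norm{h}_{C^1})$ in $L^\infty$, inserting $\mathcal{R}[h]$ itself into the coercivity estimate gives the first-order bound $\norm{(F^\ast u[h] - u,\, U[h]-U)}_{\mathcal{H}^1(\Omega)} \le C|I|\,\norm{h}_{C^1(\partial\Omega,\mathbb{R}^n)}$.

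The heart of the argument is a term-by-term splitting of $\mathcal{R}[h] - \Lambda[h]$. For the volume contribution, writing $A[h] = \sigma J_h^{\rm T} + J_h\sigma - (h\cdot\nabla + \nabla\cdot h)\sigma$ for the linear part of $\sigma - \sigma^\ast[h]$, I would decompose
\[
\int_\Omega \big( (\sigma - \sigma^\ast[h]) - A[h]\big)\nabla(F^\ast u[h]) \cdot \nabla\overline{v}\,dx \,+\, \int_\Omega A[h]\,\nabla(F^\ast u[h] - u)\cdot\nabla\overline{v}\,dx.
\]
In the first integral the coefficient is $O(\norm{h}^2_{C^1})$ in $L^\infty$ by the first-order expansion stated after (\ref{diffvar}), while $\nabla(F^\ast u[h])$ is bounded by $C|I|$; in the second, $A[h] = O(\norm{h}_{C^1})$ is paired with $\nabla(F^\ast u[h]-u) = O(|I|\norm{h}_{C^1})$ via the first-order bound. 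The boundary contribution is treated by the identical mechanism, expanding $1 - |{\rm Jac}\,F| = b[h] + O(\norm{h}^2_{C^1})$ with $b[h] = -(n-1)Hh_\nu - {\rm Div}\,h_\tau$, pairing the second-order remainder with the bounded trace $U_m[h] - F^\ast u[h]$ and the first-order factor $b[h]$ with the first-order difference $(U_m[h]-U_m) - (F^\ast u[h] - u)$, and estimating the $L^2(E_m)$ traces by the $\mathcal{H}^1(\Omega)$-norm. Each of the four pieces is thus $O(|I|\,\norm{h}^2_{C^1}\,\norm{(v,V)}_{\mathcal{H}^1(\Omega)})$, which is exactly the bound required.

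I expect the only real difficulty to lie in the bookkeeping of uniformity rather than in any individual estimate: one must check that the coercivity and solvability constants, as well as the operator norms of the pullbacks, can be chosen independently of $h \in \mathcal{B}_d$ over the whole family $\{\Omega_h\}$, and that the $O(\norm{h}^2_{C^1})$ remainders in the coefficient expansions are genuinely uniform in $L^\infty$. Since the statement is a direct adaptation of \cite[Proposition 5.4]{Darde12}, these verifications follow the template given there.
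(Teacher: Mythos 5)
Your proposal is correct and follows essentially the same route the paper relies on: the paper does not spell the argument out but defers to \cite[Proposition 5.4]{Darde12}, whose proof is precisely this scheme of subtracting \eqref{varprob} from \eqref{diffvar}, invoking coercivity of $B$, and splitting the residual into a second-order coefficient remainder paired with the uniformly bounded pullback solution plus a first-order coefficient paired with the first-order a priori bound on $(F^\ast u[h]-u,\,U[h]-U)$. The uniformity caveats you flag (solvability and trace constants over $h\in\mathcal{B}_d$, via \eqref{e_ehto} and Proposition \ref{extension}) are indeed the only points requiring care, and they are handled exactly as you indicate.
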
 

%\begin{remark}
%\label{nuisance}
Although the map $h \mapsto W[h]$ is a first order 
approximation of $h \mapsto (R[h] - R[0]) I = U[h] - U[0]$ around the origin, it does 
not provide a satisfactory definition for the Fr\'echet derivative $h \mapsto 
R'[h]$. Indeed, although $h \mapsto W[h]$ is clearly linear with
respect to the extension $h = \mathcal{E}[h] \in C^1_0({\R}^n, {\R}^n)$, it is not 
self-evident that the same also holds for the original perturbation
$h \in  C^1(\partial \Omega, {\R}^n)$ as required by Theorem~\ref{diffbound}. 
Moreover, from the 
computational view point, the extension of $h$ to the whole of ${\R}^n$ is
a nuisance that one wants to avoid. 
%\end{remark}

%\subsection{Smooth approximations of $(w[h],W[h])$}
%\label{smoothapprox}
To get rid of this problem, 
%pointed out in Remark \ref{nuisance} 
we proceed as in \cite{Darde12} and modify the first component of $(w[h],W[h])$ in an appropriate way. This procedure involves including a directional derivative of the interior potential component of the unperturbed solution $(u,U) \in \mathcal{H}^{2-\epsilon}(\Omega)$ as an argument of the sesquilinear form $B\colon \mathcal{H}^1(\Omega)\times \mathcal{H}^1(\Omega) \to {\C}$. Since the derivatives of $u$ are merely in $H^{1-\epsilon}(\Omega)$ such analysis cannot be carried out without any modifications. For this reason, we introduce a sequence of smooth approximations for $u\in H^{2-\epsilon}(\Omega)/{\C}$, and subsequently also for $(w[h],W[h])\in \mathcal{H}^1(\Omega)$. As in \cite[Subsection 5.4]{Darde12}, we may pick a sequence $(u^{(j)},U^{(j)}) \in (C^\infty(\overline\Omega)\oplus \C^M)/\C$, $j=1,2, \dots$, 
such that 
\begin{equation}\label{jlimit}
\nabla\cdot\sigma\nabla u^{(j)} = 0 \quad {\rm in} \ \Omega \qquad {\rm and} \qquad
\lim_{j\to\infty} (u^{(j)}, U^{(j)}) = (u,U) \quad {\rm in} \ \mathcal{H}^{2-\epsilon}(\Omega).
\end{equation}
Moreover, we define $(w^{(j)}[h],W^{(j)}[h])\in \mathcal{H}^1(\Omega)$ to be the unique element of $\mathcal{H}^1(\Omega)$ that solves the variational problem
\begin{equation}\label{Lj}
B\{(w^{(j)}[h],W^{(j)}[h]),(v,V)\} = \Lambda_j[h](v,V) \qquad {\rm for \ all} \ (v,V)\in\mathcal{H}^1(\Omega),
\end{equation}
where the antilinear functional $\Lambda_j[h]\colon \mathcal{H}^1(\Omega) \to {\C}$ is defined 
via replacing $(u,U)$ by $(u^{(j)},U^{(j)})$ in the definition of $\Lambda[h]$. Through a slight 
variation of the argument in the proof of \cite[Lemma 5.7]{Darde12}, one 
easily obtains that
\begin{equation}\label{wjtow}
\lim_{j\to\infty} (w^{(j)}[h],W^{(j)}[h]) = (w[h],W[h]) \qquad {\rm in} \ \mathcal{H}^1(\Omega).
\end{equation} 

%\subsection{Convergence}
We proceed by defining the `augmented interior derivatives' by 
\begin{equation}\label{modified}
\tilde{w}[h] = w[h] - h\cdot \nabla u, \qquad \tilde{w}^{(j)}[h] = w^{(j)}[h] - h\cdot\nabla u^{(j)}, \quad j=1,2, \dots \ .
\end{equation}
Due to \eqref{jlimit} and \eqref{wjtow}, it follows that 
\begin{equation}
\label{wtiljtowtil}
\lim_{j\to\infty} (\tilde{w}^{(j)}[h],W^{(j)}[h]) = (\tilde{w}[h],W[h]) \qquad {\rm in} \ \mathcal{H}^{1-\epsilon}(\Omega)
\end{equation}
for any $\epsilon > 0$ (cf.~\cite{Lions72}). In the following, we will show that $(\tilde{w}[h],W[h])\in \mathcal{H}^{1-\epsilon}(\Omega)$ is the unique solution of \eqref{bndeqs} for $h\in\mathcal{B}_d$. In particular, the pair $(\tilde{w}[h],W[h])$ turns out to be 
independent of the extension of $h \in C^1(\partial \Omega, {\R}^n)$ to the
whole of ${\R}^n$.

%\begin{theorem}\label{frechetproof}
%For any $h\in \Ne({\partial\Omega})$ the boundary value problem \eqref{bndeqs} has a unique solution $(u'[h],U'[h])\in \mathcal{H}^{1-\epsilon}(\Omega)$, $\epsilon>0$. Moreover, if $h\in \mathcal{B}_d\cap \Ne(\Omega)$, then this solution can be given as $(u'[h],U'[h]) = (\tilde{w}[h],W[h])$. 
%\end{theorem}
{\em Proof of Theorems \ref{diffbound} and \ref{main}}.
In the first part of the proof, we show that the derivative problem 
(\ref{bndeqs}) is uniquely solvable, with the corresponding solution
being the above constructed pair $(\tilde{w}[h],W[h]) \in \mathcal{H}^{1-\epsilon}(\Omega)$ 
if $h \in \mathcal{B}_d$. First of all, we note that the uniqueness of the solution can be 
proved in exactly the same way as in the first part of the proof of 
\cite[Theorem 6.1]{Darde12}, which means that we can focus 
solely on the existence. In case that $\nu \cdot h \equiv 0$ and 
$h \in \mathcal{B}_d$, the fact that $(\tilde{w}[h],W[h])$ is a solution to 
(\ref{bndeqs}) follows from the second and third parts of the proof of 
\cite[Theorem 6.1]{Darde12}. Due to the linearity of the right-hand side 
of (\ref{bndeqs}) with respect to $h$, the unique solvability of (\ref{bndeqs}) thus follows if
we are able to show that $(\tilde{w}[h],W[h])$ is a solution also if 
$h = h_\nu \nu \in \mathcal{B}_d$. (For a general $h \notin \mathcal{B}_d$ the unique 
solution of (\ref{bndeqs}) can then be obtained by rescaling the 
corresponding solution for $h/c \in \mathcal{B}_d$ with large enough $c > 0$.)

(1) Assume that $h = h_\nu \nu \in \mathcal{B}_d$ and let us consider what kinds of variational equations $(\tilde{w}^{(j)}[h],W^{(j)}[h])\in \mathcal{H}^{1}(\Omega)$ satisfies. For now, let $\varphi\in C^\infty(\overline\Omega)$ and $V\in{\C}^M$ be arbitrary. Recalling first \eqref{modified} and the definition of $(w^{(j)}[h], W^{(j)}[h])\in\mathcal{H}^1(\Omega)$, and then using standard vector calculus, the first part of (\ref{jlimit}) 
and the divergence theorem, we obtain (cf., e.g., \cite{Hyvonen10})
\begin{align} 
\label{vareq}
B\big\{(\tilde{w}^{(j)}[h],W^{(j)}[h]),(\varphi, V)\big\} &= \int_{\partial\Omega} h_\nu\nu\cdot
\Big(\sigma\nabla u^{(j)}\frac{\partial \overline{\varphi}}{\partial\nu} - (\sigma\nabla u^{(j)}\cdot \nabla\overline{\varphi})\nu\Big) dS \nn \\ 
&\qquad + \sum_{m=1}^M\frac{1}{z_m}\int_{E_m}h_\nu\frac{\partial u^{(j)}}{\partial\nu}(\overline{V}_m -\overline{\varphi})\, dS  \\ 
&\qquad - \sum_{m=1}^M \frac{n -1}{z_m}\int_{E_m} h_\nu H (U^{(j)}_m -u^{(j)})(\overline{V}_m -\overline{\varphi})\,dS. \nn 
\end{align}
By dividing $\nabla u^{(j)}$ and $\nabla \varphi$ into tangential and normal
components on $\partial \Omega$, the first integrand further simplifies as
$$
 h_\nu\nu\cdot
\Big(\sigma\nabla u^{(j)}\frac{\partial \overline{\varphi}}{\partial\nu} - (\sigma\nabla u^{(j)}\cdot \nabla\overline{\varphi})\nu\Big) = -  h_\nu (\sigma\nabla u^{(j)})_\tau \cdot (\nabla\overline{\varphi})_\tau .
$$
Due to \eqref{jlimit} and the regularity of the unperturbed solution 
$(u,U) \in \mathcal{H}^{2-\epsilon}(\Omega)$, we may take the limit $j\to\infty$ in \eqref{vareq}, yielding  
%We further make the disposition into normal and tangential components for $\sigma\nabla u$ and $\nabla\overline{\varphi}$, i.e., 
%$$
%\sigma\nabla u = (\sigma\nabla u)_\tau + (\nu\cdot\sigma\nabla u)\nu, \qquad \nabla\overline{\varphi} = (\de%l\overline{\varphi})_\tau + \frac{\partial\overline{\varphi}}{\partial\nu}\nu
%$$ 
%where we simply define $(\nu\cdot\sigma\nabla u)_\tau$ and $(\nabla\overline{\varphi})_\tau$ via these equalities. This method yields 
\begin{align}
\label{varformula}
\lim_{j\to\infty}B\big\{ (\tilde{w}^{(j)}[h],W^{(j)}[h]),(\varphi,V) \big\} = &- \int_{\partial\Omega} h_\nu (\sigma\nabla u)_\tau \cdot (\nabla\overline{\varphi})_\tau \, dS \nonumber \\
&+ \sum_{m=1}^M\frac{1}{z_m}\int_{E_m}h_\nu\frac{\partial u}{\partial\nu}(\overline{V}_m-\overline{\varphi})\,dS \\
& -\sum_{m=1}^M\frac{n-1}{z_m} \int_{E_m} h_\nu H (U_m-u)(\overline{V}_m-\overline{\varphi}) \,dS.
\nonumber
\end{align}

To prove the first equality of \eqref{bndeqs}, let $V=0$ and $\varphi\in C^\infty_0(\Omega)$ be arbitrary. According to the definition of the sesquilinear form $B$, the identity \eqref{vareq} and the definition of distributional differentiation 
(cf., e.g., \cite{Dautray88}), it holds that 
\begin{equation}\label{disteq}
\dual{\nabla\cdot\sigma\nabla \tilde{w}^{(j)}[h],\varphi}_\Omega = 0.
\end{equation}
As the elliptic differential operator $\nabla\cdot\sigma\nabla \colon H^{1-\epsilon}(\Omega)/{\C} \to H^{-1-\epsilon}(\Omega)$ is continuous for any $\epsilon\in{\R}$ such that $\epsilon-1/2\notin \Z$ \cite[Chapter 1, Proposition 12.1]{Lions72}, we may pass the limit inside the brackets of \eqref{disteq}. Consequently, $\nabla\cdot\sigma\nabla \tilde{w}[h] = 0$ is satisfied in the sense of distributions in $\Omega$.

Assume next that $\varphi\in C^\infty(\overline\Omega)$ and let still $V=0$. The (generalized) Green's formula (cf. \cite[p.~382, Corollary 1]{Dautray88}) and 
\eqref{disteq} indicate 
\begin{equation*}
\begin{split}
B\big\{ (\tilde{w}^{(j)}[h],W^{(j)}[h]), (\varphi,0) \big\} &= \dual{\nu\cdot\sigma\nabla\tilde{w}^{(j)}[h],\overline{\varphi}}_{\partial\Omega}\\
&\qquad +\sum_{m=1}^M \frac{1}{z_m}\int_{E_m} (\tilde{w}^{(j)}[h] - W_m^{(j)}[h])\overline{\varphi}\,dS.
\end{split}
\end{equation*}
Moreover, according to \cite[Chapter 2, Theorem 7.3]{Lions72}, the Neumann trace map $v\mapsto \nu\cdot\sigma\nabla v|_{\partial\Omega}$ is well-defined and bounded from the closed subspace 
$$
\Set{v\in H^{1-\epsilon}(\Omega)/{\C}}{\nabla\cdot\sigma\nabla v = 0} \subset H^{1-\epsilon}(\Omega)/{\C}
$$
to $H^{-1/2 - \epsilon}({\partial\Omega})$. Thus, \eqref{wtiljtowtil} and the trace theorem give 
\begin{equation*}
\begin{split}
\lim_{j\to\infty}B\big\{ (\tilde{w}^{(j)}[h],W^{(j)}[h]), (\varphi,0) \big\} &= \dual{\nu\cdot\sigma\nabla\tilde{w}[h],\overline{\varphi}}_{\partial\Omega}\\
&\qquad +\sum_{m=1}^M \frac{1}{z_m}\int_{E_m} (\tilde{w}[h] - W_m[h])\overline{\varphi}\,dS.
\end{split}
\end{equation*}
On the other hand, by \eqref{varformula} it also holds that 
\begin{equation*}
\begin{split}
\lim_{j\to\infty}B\big\{ (\tilde{w}^{(j)}[h],W^{(j)}[h]), (\varphi,0) \big\} = &- \int_{\partial\Omega} h_\nu (\sigma\nabla u)_\tau \cdot (\nabla\overline{\varphi})_\tau \, dS \nonumber \\
&- \sum_{m=1}^M \frac{1}{z_m}\int_{E_m} h_\nu\frac{\partial u}{\partial\nu} \overline{\varphi} \, dS\\ 
&+\sum_{m=1}^M\frac{n-1}{z_m}\int_{E_m}h_\nu H (U_m-u) \overline{\varphi} \,dS. \nonumber
\end{split}
\end{equation*}
As $(\nabla\overline{\varphi})_\tau = {\rm Grad}\, \overline{\varphi}$ on $\partial \Omega$ (cf., e.g., \cite{Colton92}) and $C^\infty(\Omega)|_{\partial\Omega}$ is dense in $H^s({\partial\Omega})$ for any $s\in{\R}$, this proves that $(\tilde{w}[h],W[h])$ satisfies the second equation of \eqref{bndeqs}, since 
${\displaystyle h \cdot \nu_{\partial E} = 0}$ by assumption.

To prove the remaining, third condition of (\ref{bndeqs}), let $V$ be the $m$th 
coordinate vector and choose $\varphi \equiv 0$. Using the definition of 
$B$, \eqref{wtiljtowtil} and \eqref{vareq}, we conclude that 
\begin{equation}
\int_{E_m}(W_m[h] - \tilde{w}[h])dS = -\int_{E_m} h_\nu\Big((n-1)H(U_m-u) - \frac{\partial u}{\partial\nu} \Big)dS.
\end{equation}
Since $m$ was chosen arbitrarily, it follows that $(\tilde{w}[h],W[h])\in \mathcal{H}^{1-\epsilon}(\Omega)$ is a solution to \eqref{bndeqs} for $h \in \mathcal{B}_d$.	 

(2) Let us then prove that the mapping 
$$
R': (h, I) \mapsto U'[h], \quad C^1(\partial \Omega, {\R}^n) \times {\C}^M_\diamond 
\to {\C}^M / {\C}, 
$$
really defines the Fr\'echet derivative of Theorem \ref{diffbound} as claimed
in Theorem \ref{main}. First of all, it is easy to see that $R'$ is bilinear
since the right-hand side of (\ref{bndeqs}) depends bilinearly on 
$h \in C^1(\partial D, {\R}^n)$ and 
$(u,U)$, and the unperturbed solution $(u,U)$ itself depends linearly on the 
applied current pattern. Moreover, due to Proposition \ref{hderivative}
and since $U'[h] = W[h]$ for $h \in \mathcal{B}_d$ by the first part of the proof, we may estimate as follows:
$$
\| U[h] - U - U'[h] \|_{\C^M / \C} \leq C |I| \|h \|_{C^1(\partial \Omega, {\R}^n)}^2, \qquad
h \in \mathcal{B}_d,
$$
which completes the proof as $C>0$ can be chosen independently of 
$I \in {\C}^M_\diamond$ and $h \in \mathcal{B}_d$. \hfill $\Box$

We complete this section by providing a proof for Corollary \ref{sample}.

{\em Proof of Corollary \ref{sample}}. 
As in the previous proof, it is enough to consider 
small $h$ in the normal bundle of $\partial \Omega$, i.e., 
$h = h_\nu \nu \in \mathcal{B}_d$,
by the virtue of the linearity of the claimed sampling formula with respect to $h$ and the fact 
that for tangential perturbations the assertion follows through the same 
line of reasoning as \cite[Corollary 4.2]{Darde12}.

Let $\tilde{I}\in{\C}_\diamond^M$ and $(\tilde{u},\tilde{U})\in\mathcal{H}^1(\Omega)$ be as in Corollary \ref{sample} and consider $h = h_\nu \nu \in \mathcal{B}_d$. Due to Theorem 
\ref{main}, the fact that $(u'[h], U'[h] = (\tilde{w}[h], W[h])$, the limit 
\eqref{wtiljtowtil} and the variational formulation \eqref{perus} corresponding to the current pattern $\tilde{I}$, it holds that 
\begin{equation*}
%\label{sample01}
\sum_{m=1}^M \tilde{I}_m(R'(h,I))_m = \lim_{j\to\infty}\sum_{m=1}^M\tilde{I}_m W_m^{(j)}[h] = \lim_{j\to\infty} B\big\{ (\tilde{w}^{(j)}[h],W^{(j)}[h]),(\hspace{1pt}\overline{\tilde{u}},\,\overline{\tilde{U}}) \big\}.
\end{equation*}
On the other hand, following the same line of reasoning as in \eqref{varformula} --- and approximating $\tilde{u}$ by a sequence of smooth functions $\{ \varphi_j \}$ --- we obtain that 
\begin{align*}
\lim_{j\to\infty} B\big\{ (\tilde{w}^{(j)}[h],W^{(j)}[h]),(\hspace{1pt}\overline{\tilde{u}},\,\overline{\tilde{U}}) \big\} = &-\int_{\partial\Omega} h_\nu(\sigma\nabla u)_\tau \cdot (\nabla\tilde{u})_\tau dS \nonumber \\
&+ \sum_{m=1}^M \frac{1}{z_m}\int_{E_m}h_\nu \frac{\partial u}{\partial\nu}(\tilde{U}_m - \tilde{u}) \, dS  \\
&- \sum_{m=1}^M \frac{n-1}{z_m}\int_{E_m} h_\nu H (U_m-u)(\tilde{U}_m-\tilde{u})\, dS, \nn
\end{align*}
which is the normal bundle version of (\ref{sampling}) and thus 
completes the proof. \hfill $\Box$

\section{Algorithmic implementation}
\label{sec:Algorithm}
In this section, we introduce our numerical algorithm for the simultaneous 
reconstruction of the admittivity distribution and the object boundary.
It is assumed that the object of interest $\Omega \subset {\R}^3$ is a cylinder 
$D \times (0,h_0)$, where $D \subset {\R}^2$ is a simply 
connected and bounded cross-section shape, and $h_0 > 0$ the known height of the body. The
electrodes are of the form $E_m = \gamma_m \times (0,h_0)$, with each
$\gamma_m$ being a connected part of $\partial D$ with a known length, i.e., 
the electrodes are rectangular, homogeneous in the vertical direction and assumed to be of a known width and the same height as the 
object itself. In particular, if the admittivity distribution were also 
homogeneous in the vertical direction --- as it is in our numerical experiments ---, the measurement setting could be 
modelled by a two-dimensional forward problem. Be that as it may, we
carry out all numerical computations in three dimensions in order to 
demonstrate the feasibility of our method in a realistic 
framework. Moreover, we only consider real-valued and isotropic electrical 
admittivities, i.e.,
$\sigma: \Omega \rightarrow \R_+$. Note that  the above assumptions on the target are made only for the sake of simplicity; the generalization of the algorithm
to more general three-dimensional settings is conceptually straightforward.

In the following three sections we outline the ideas behind our reconstruction method, but do not discuss all details about, e.g., the form of the smoothness prior for the admittivity; see, e.g.,~\cite{Kaipio00,Kaipio05} for more information.  

\subsection{Parametrization of the unknowns}
\label{sec:parameters}

In many practical situations the examined body has a star-shaped cross-section. In consequence, we search for the unknown boundary $\partial D$ as a $C^\infty$-curve parametrized by
 \begin{equation}\label{parametrization1}
{\gamma_\alpha}(\phi) = \bigg[\alpha_0 + \sum_{j=1}^N(\alpha_j \cos j\phi + \alpha_{j+N}\sin j\phi) \bigg] \begin{bmatrix}\cos\phi\\ \sin \phi\end{bmatrix}, \qquad \alpha_0, \ldots,\alpha_{2N} \in\R,
\end{equation}
where $\phi$ is the polar angle and the coefficients $\alpha = [\alpha_0, \dots, \alpha_{2N}]^{\rm T} \in \R^{2N +1}$ are assumed to be such that the curve does not intersect itself. Let $ D_\alpha $ denote the bounded set of $ \R^2 $ with $ \partial D_\alpha = \gamma_\alpha([0, 2\pi])$ and furthermore define $ \Omega_\alpha = D_\alpha \times (0,h_0) $. As it is assumed that the width of the (rectangular) electrodes is known, we may thus parametrize them by their initial polar angles $\theta_m$, $m=1, \dots, M$, in the counterclockwise direction. The vector containing these angles is denoted by $ \theta = [\theta_1,\ldots,\theta_M]^{\rm T} $. We assume that the electrodes are numbered in the natural order, that is, the terminal angle of an electrode precedes the initial angle of the following one. 

Approximate forward solutions to \eqref{cemeqs} in $ \Omega_\alpha $ are computed by a {\em finite element method} (FEM).
The FEM solver used in this work is an adaptation of the implementation in
\cite{Vauhkonen99}. In the FE scheme, we discretize
the computational domain~$\Omega_{\alpha}$ into tetrahedrons and approximate the distributions of admittivity and potential in piecewise linear and quadratic 
bases, respectively. In our reconstruction algorithm, the geometric parameters $ \alpha $ and $ \theta $ change iteratively and consequently $\Omega_\alpha$ and its FEM mesh also change at each step. In order to fix the admittivity discretization independently of such deformations, we pick a sufficiently large cylinder $ \Sigma = B\times (0,h_0) $, with a discoidal base $B$ inside which we let the cross-section $ D_\alpha $ evolve. Given this background cylinder, we look for admittivities of the form $ \sum_k\sigma_k\varphi_k $, where $ \{\sigma_k\}\subset (0,\infty) $ and $ \{\varphi_k\} $ is the piecewise linear basis related to a fixed tetrahedral mesh of $\Sigma $. The admittivity values are transformed between the fixed `reconstruction mesh' of $\Sigma$ and the varying ones in $\Omega_\alpha$ via linear interpolation. 
%For details about the implementation of the FEM for the CEM, we refer to \cite{Vauhkonen97}.  

\subsection{Bayesian framework} 
Although our reconstruction algorithm, which will be introduced in Section~\ref{sec:algorithm}, cannot be considered purely Bayesian, its underlying motivation is statistical, and thus the basic ideas behind Bayesian inversion are outlined in the following.
In the Bayesian approach, all quantities are considered as random variables with some assumed prior probability distributions. Combining the information from the prior with the measurement data, one gets the updated posterior distribution for the parameters of interest \cite{Kaipio05}.

Let $ \{I^{(j)}\}_{j=1}^{M-1}$ be a basis of $ \R^ M_\diamond $. The voltages measured at the electrodes on the boundary of $\Omega_\alpha $ are modelled as 
\begin{equation}
\label{noisy}
V^{(j)}\, = \, U^{(j)}(\sigma,\alpha,\theta) + \eta^{(j)} \in \R^M,\qquad j=1,\ldots,M-1.
\end{equation}
Here, $(u^{(j)}(\sigma,\alpha,\theta),U^{(j)}(\sigma,\alpha,\theta))\in H^1(\Omega_\alpha)\oplus \R_\diamond^M$ is the solution of \eqref{cemeqs} in the domain~$ \Omega_\alpha $ with a real-valued admittivity $ \sigma $, when the net electrode current pattern $ I^{(j)} $ is injected through the $ M $ electrodes parametrized by their initial polar angles $ \theta \in \R^M $. Notice that we have fixed the ground level of potential by requiring that $ U^{(j)}(\sigma,\alpha,\theta) $ has vanishing mean. The components of the noise vector $ \eta^{(j)} \in \R^M $ are assumed to be independent realizations of zero mean Gaussian random variables. To simplify the notation, we pile the electrode currents, potentials and noise vectors into arrays of length $ M^2 - M $, i.e., employ the shorthand notations 
\begin{equation}\label{pile}\begin{split}
\mathcal{I} & = [(I^{(1)})^{\rm T},\ldots,(I^{(M-1)})^{\rm T}]^{\rm T},\\  \mathcal{V} & = [(V^{(1)})^{\rm T},\ldots,(V^{(M-1)})^{\rm T}]^{\rm T}\\
\eta & = [(\eta^{(1)})^{\rm T},\ldots,(\eta^{(M-1)})^{\rm T}]^{\rm T}\\ 
\mathcal{U}(\sigma,\alpha,\theta;\mathcal{I}) & = [U^{(1)}(\sigma,\alpha,\theta)^{\rm T},\ldots,U^{(M-1)}(\sigma,\alpha,\theta)^{\rm T}]^{\rm T}. 
\end{split}\end{equation}
This allows us to write the noisy measurement model as 
\begin{equation}\label{noisy_piled} 
\mathcal{V} = \, \mathcal{U}(\sigma,\alpha,\theta;\mathcal{I}) + \eta \in \R^{M^2-M}.
\end{equation}

The discretized admittivity is given a homogeneous Gaussian smoothness prior with a covariance $ \Gamma_\sigma$ and a positive homogeneous mean $ \sigma^\star $; for more details about smoothness priors, see \cite{Kaipio05}. To include control over the geometric information, the coefficients $\alpha$ are provided with a Gaussian prior with a mean $ \alpha^\star \in \R^{2N+1} $ and a covariance matrix $ \Gamma_\alpha = {\rm diag}(a_0^2,\ldots,a_{2N}^2), $ where 
\begin{equation}\label{alpha_prior}
a_j = \begin{cases} l^{-s}a, & \; l=0,\ldots,N \\  (l-N)^{-s}a,& \; l=N+1,\ldots, 2N. \end{cases}
\end{equation}
By adjusting the parameters $ s,a>0$, one may tune the prior assumption on the regularity of the object boundary. The electrode initial polar angles are also given a Gaussian prior density with a mean $ \theta^\star \in \R^M $ and a diagonal covariance matrix $ \Gamma_\theta = \tau^2 \mathbb{I} $, where $\tau > 0$ is the corresponding standard deviation. As a result, for a measurement $ \mathcal{V} $ of the form \eqref{noisy}, a {\em maximum a posteriori} (MAP) estimate is obtained as a minimizer of the Tikhonov-type functional 
\begin{align}
\label{mapfunctional}
\Phi(\sigma,\alpha,\theta) \, := &\, \,  (\mathcal{U}(\sigma,\alpha,\theta;\mathcal{I}) -\mathcal{V} )^{\rm T} \Gamma_\eta^{-1}(\mathcal{U}(\sigma,\alpha,\theta;\mathcal{I}) -\mathcal{V} )   + (\sigma-\sigma^\star)^{\rm T} \Gamma_\sigma^{-1}(\sigma-\sigma^\star) \nonumber \\[1pt] &+ \, (\alpha -\alpha^\star)^{\rm T} \Gamma_\alpha^{-1}(\alpha-\alpha^\star) +\, \tau^{-2}|\theta - \theta^\star|^2 ,
\end{align}
where $ \Gamma_\eta $ is the diagonal noise covariance matrix; see \cite{Kaipio05}. 

\subsection{The (quasi-Bayesian) iterative algorithm} 
\label{sec:algorithm}
According to our experience, the EIT measurements modelled by the CEM are 
typically more sensitive to the exterior boundary shape and electrode 
locations than to the internal admittivity distribution. As a consequence, 
it seems to be computationally advantageous to first fix a crude 
constant approximation for the admittivity distribution, then use a (deterministic) 
iterative scheme to come up with a relatively good model for the object 
boundary and electrode positions, and finally use these preliminary estimates 
as the prior expectations in the to-be-minimized MAP functional 
\eqref{mapfunctional}. It should be emphasized that such an initialization
of the means makes our algorithm strictly speaking non-Bayesian, since the choice of the 
priors should be independent of the data. Be that as it may, according to our experience, such a preliminary step leads to faster and more reliable convergence.
(We do not claim, however, that this kind of two-step implementation is
the only feasible choice.)

To be more precise, we first choose the covariance matrices $\Gamma_\eta$, 
$\Gamma_\sigma$, $\Gamma_\alpha$ and $\Gamma_\theta$ according to the assumed prior 
information on the variation of the corresponding parameters, 
pick an initial guess 
$(\alpha^{(0)}, \theta^{(0)})$ for the measurement geometry (corresponding to some disk-shaped cross-section in all of our numerical studies), 
and fix~$\sigma^\star$ to be the {\em constant} admittivity that 
minimizes the output least squares part of $\Phi$, i.e., 
\begin{equation*}
%\label{Phi0}
%\Phi_0(\sigma,\alpha,\theta) \, :=  \,  
(\mathcal{U}(\sigma,\alpha,\theta;\mathcal{I}) -\mathcal{V} )^{\rm T} \Gamma_\eta^{-1}(\mathcal{U}(\sigma,\alpha,\theta;\mathcal{I}) -\mathcal{V} )
\end{equation*}
when $(\alpha, \theta) = (\alpha^{(0)}, \theta^{(0)})$.
Subsequently, the following two-stage scheme is employed:

\subsubsection*{First stage of the algorithm: choosing the prior means}
%We start by fixing $\sigma^\star$ to be the {\em constant} admittivity that 
%minimizes the output least squares part of $\Phi$, i.e., 
%\begin{equation*}
%\label{Phi0}
%\Phi_0(\sigma,\alpha,\theta) \, :=  \,  
%(\mathcal{U}(\sigma,\alpha,\theta;\mathcal{I}) -\mathcal{V} )^{\rm T} \Gamma_\eta^{-1}(\mathcal{U}(\sigma,\alpha,\theta;\mathcal{I}) -\mathcal{V} )
%\end{equation*}
%when $(\alpha, \theta) = (\alpha^{(0)}, \theta^{(0)})$. 
%Subsequently, we fix $\sigma = \sigma^\star$ in \eqref{mapfunctional} 
We apply a Levenberg--Marquardt type method in order to 
choose the geometry
parameters that are used as the prior means $(\alpha^\star,\theta^\star)$ 
when $\Phi$ of \eqref{mapfunctional} is minimized simultaneously with respect 
to all of its variables in the second stage of the algorithm: 
\begin{enumerate}
\item Fix $\sigma =
\sigma^\star$ in \eqref{mapfunctional}, consider $\Phi$ as a function
of only two  variables $\alpha$ and $\theta$, and set 
$(\alpha^\star, \theta^\star) = (\alpha^{(0)}, \theta^{(0)})$.
\item Calculate the Gauss--Newton minimization direction for $\Phi(\alpha, \theta)$ of 
\eqref{mapfunctional} at $(\alpha, \theta) = (\alpha^\star, \theta^\star)$; see, e.g., \cite{Nocedal99}.
\item Minimize $\Phi(\alpha, \theta)$ over the line passing through $(\alpha^\star, \theta^\star)$ in the Gauss--Newton direction by the Golden section line 
search. Redefine $(\alpha^\star, \theta^\star)$ to be the obtained minimizer.
\item If satisfactory convergence is achieved, terminate the iteration.
Otherwise, return to step $2$.
\end{enumerate}
This part of the reconstruction algorithm is stable and does not, in particular,
seem very sensitive with respect to the choice of the covariance matrices in 
\eqref{mapfunctional}.

\subsubsection*{Second stage of the algorithm: finding the MAP estimate}
After the prior means $(\sigma^\star, \alpha^\star, \theta^\star)$ have
been chosen in the earlier parts of the algorithm, the final stage
consists of minimizing $\Phi$ of \eqref{mapfunctional} by the Gauss--Newton
algorithm:
%Given an initial guess $(\sigma^{(0)}, \alpha^{(0)})$ for the unknown geometric
%parameters, choose the prior means $(\sigma^\star, \alpha^\star, \theta^\star)$ 
%as described at the end of the previous section. Subsequently, set $k=1$, $(\sigma^{(k)}, \alpha^{(k)}, \theta^{(k)}) = (\sigma^\star, \alpha^\star, \theta^\star)$, and minimize $\Phi$ of \eqref{mapfunctional} iteratively as follows:
\begin{enumerate}
\item Set $k=1$ and $(\sigma^{(k)}, \alpha^{(k)}, \theta^{(k)}) =(\sigma^\star, \alpha^\star, \theta^\star)$.
\item Calculate the Gauss--Newton direction for $\Phi{(\sigma, \alpha,\theta)}$ of \eqref{mapfunctional}
at $(\sigma, \alpha, \theta) = (\sigma^{(k)}, \alpha^{(k)}, \theta^{(k)})$. 
%see, e.g., \cite{Nocedal99}.
\item Minimize $\Phi$ over the line passing through $(\sigma^{(k)}, \alpha^{(k)}, \theta^{(k)})$ in the Gauss--Newton direction by the Golden section line 
search and define $(\sigma^{(k+1)}, \alpha^{(k+1)}, \theta^{(k+1)})$ to be the obtained minimizer.
\item Unless  satisfactory convergence is achieved, increase $k$ by one
and return to step $2$.
\end{enumerate}

\vspace{2mm}

%we perform the minimization of $ \Phi $ by the Gauss--Newton method combined with the golden section line search; see \cite{Nocedal99} for details. As a further modification, we update the functional \eqref{mapfunctional} at $ j $th Gauss--Newton step by setting $ \alpha^\star = \alpha^{(j)} $ and $ \theta^\star = \theta^{(j)} $.  
For the computation of the needed Gauss--Newton directions, one needs the Jacobian of $ \mathcal{U}(\sigma,\alpha,\theta;\mathcal{I}) $ with respect to $ \sigma $, $ \alpha $ and $ \theta $. By the Jacobian with respect to $ \sigma $ we mean the one with respect to the coefficients of the piecewise linear basis in the `reconstruction cylinder' $\Sigma$ introduced in the last paragraph of Subsection~5.1. (Notice that the coefficients of the basis functions supported outside $\Omega_\alpha$ do not play a role in $ \mathcal{U}(\sigma,\alpha,\theta;\mathcal{I}) $, but they {\em do} affect the last term on the first line of \eqref{mapfunctional}.) For the estimation of the derivatives with respect to $\sigma$ and $\theta$, we refer to \cite{Kaipio00} and \cite{Darde12}, respectively. By the dual relation \eqref{sampling}, the Jacobian with respect to $ \alpha $ can be sampled via trivial linear algebra (a change of basis) after evaluating the expressions on the right-hand side of \eqref{sampling} for each triplet 
$$ 
h(\phi)=  \psi^{(l)}(\phi) \begin{bmatrix} \cos \phi \\ \sin \phi
\end{bmatrix}, \quad (u,U) = (u^{(i)},U^{(i)}), \quad 
(\tilde{u},\tilde{U}) = (u^{(j)},U^{(j)}) 
$$ 
over the indices $ l = 0,\ldots, 2N $ and $ i,j = 1,\ldots, M-1 $. Here,
$(u^{(j)},U^{(j)}) = (u^{(j)}(\sigma,\alpha,\theta),U^{(j)}(\sigma,\alpha,\theta)) $ is the solution of \eqref{cemeqs} for $I = I^{(j)}$ and the setting parametrized by
$(\sigma,\alpha,\theta)$, and
$ \psi_l(\phi) = \cos l\phi $ if $ l\leq N $ and $ \psi_l(\phi)= \sin (l-N)\phi $ when $ l \geq N+1 $. We emphasize that one needs not solve any extra forward problems for this procedure since at each iteration step all the pairs $ (u^{(j)}(\sigma,\alpha,\theta),U^{(j)}(\sigma,\alpha,\theta)) $, $ j = 1,\ldots, M-1 $, must be computed already for evaluating the functional $ \Phi $ of \eqref{mapfunctional}. By the assumption that the electrode width is known, for any given electrode the terminal polar angle is a smooth function of the initial one and $ \alpha $. This functional dependence can be written explicitly by employing the arc length formula for the parametrization \eqref{parametrization1}, and this information can then be included in the Jacobians with the help of the Leibniz rule and the chain rule for the total derivative.

\begin{remark}
If one chooses to skip the first stage of the above introduced algorithm and
use the (simple) initial guess $(\alpha^{(0)},\theta^{(0)})$ as the prior 
mean for the geometric parameters in the second stage, 
with suitable parameter choices the reconstructions for the numerical experiments of the
following section typically remain qualitatively the same, but the convergence 
slows down considerably. What is more, in practice the initial guess  
$(\alpha^{(0)},\theta^{(0)})$ for the measurement geometry is often more 
accurate than the ones we employ in our numerical experiments, which further
reduces the practical relevance of the first stage of the algorithm.
\end{remark}

\section{Numerical experiments}
\label{sec:Numerics}
Our main aim is not so much to compare the functionality of our method with reconstruction techniques presented elsewhere, but to make an `internal' comparison between three cases: 
\begin{itemize}
\item[(i)] the measurement geometry, i.e.~the object shape and the electrode locations, is known; 
\item[(ii)] the measurement geometry is known inaccurately but this is not taken into account in the algorithm; 
\item[(iii)] the unknown boundary shape is estimated simultaneously with the admittivity distribution. 
\end{itemize}
We will demonstrate that (i) and (iii) give comparable results, while the quality of reconstructions for (ii) is intolerably bad.

We present three numerical experiments, in each of which $M=16$ identical electrodes of known width are attached to the object of interest. We assume to know the contact impedances and choose the values $z_m=1$, $m=1, \dots, M$. 
The first experiment, though a bit impractical, acts as an initial probe to test the functionality of the computed Fr\'echet derivatives: we apply (the first part of) our algorithm to the shape estimation of a target object with a known homogeneous admittivity distribution. In the second experiment we consider a simple shape (an ellipse) and a smooth admittivity distribution. %We compare results with exactly known measurement geometry to simultaneous reconstruction of the object shape and the admittivity distribution.
In the last experiment the object shape is moderately complicated and the admittivity phantom consists of inclusions of constant admittivity in a homogeneous background. 
%Again, we make comparison between results corresponding to known and reconstructed geometry.   

Let $ \varsigma $ be the to-be-reconstructed admittivity and suppose the pair $(\beta, \vartheta)$ provides a parametrization of the target measurement setting in the sense of Section~\ref{sec:parameters}. To be quite precise, the latter statement is a bit ambiguous because {\em none} of the considered target shapes $\partial D$ can be given in the form \eqref{parametrization1} with a finite $N$, but for ease of notation we have decided to allow here an `infinite' shape parameter vector $\beta$.
For each experiment we simulate the exact data $ \mathcal{U}(\varsigma,\beta,\vartheta;\mathcal{I}) $ using the input current basis $ I^{(j)} = {\rm e}_1 - {\rm e}_j \in\R^M_\diamond$, $ j = 2,\ldots,M $, where  $ {\rm e}_j $ is the $ j $th Euclidean basis vector.
Notice that there is no danger of an inverse crime because a new finite element mesh for the approximate domain $ \Omega_{\alpha} $ is generated at each iteration of the reconstruction algorithm, and these meshes differ considerably from the mesh of the target object $\Omega = \Omega_\beta$ used for the data simulation.

The actual noisy measurement realization $ \mathcal{V} $ is formed via \eqref{noisy_piled}, with $(\sigma,\alpha,\theta) = (\varsigma,\beta,\vartheta)$, by picking a particular noise component $ \eta_m^{(j)} $, $ m=1,\ldots,M $, $ j=1,\ldots,M-1 $, from a zero mean Gaussian distribution with the variance 
\begin{equation}\label{sim_noise}
0.01^2|U_m^{(j)}(\varsigma,\beta,\vartheta)|^2 + 0.001^2\max_{1 \leq k,l \leq M} | U_k^{(j)}(\varsigma,\beta,\vartheta) - U_l^{(j)}(\varsigma,\beta,\vartheta) |^2. 
\end{equation}
Here, the relation between $ \mathcal{U}(\varsigma,\beta,\vartheta;\mathcal{I}) $ and $ U_m^{(j)}(\varsigma,\beta,\vartheta) $ is as in \eqref{pile}. 
Sections~\ref{sec:smooth}  and \ref{sec:piecewise}, where the cases 
(i--iii) are compared, work with fixed realizations of the noise vector $\eta$ 
in order to allow a fair comparison.
%When noiseless data is considered, we set $\eta = 0$, i.e., $\mathcal{V}
%= \mathcal{U}(\varsigma,\beta,\vartheta;\mathcal{I})$.

For further justification of the noise model \eqref{sim_noise}, see \cite{Hyvonen10}, but anyway notice that \eqref{sim_noise} corresponds to more than one percent of relative noise in the {\em absolute} data, which is a substantial amount for an EIT problem. In each numerical experiment we assume to know the covariance of the measurement noise, i.e., we use the diagonal covariance matrix defined by the noise model \eqref{sim_noise} as $ \Gamma_\eta $ in \eqref{mapfunctional}. We do not elaborate on the choice of $ \Gamma_\sigma $ in \eqref{mapfunctional} in further detail; it is built based on the proper (informative) smoothness prior proposed in \cite{Kaipio05}, reflecting the {\em a priori} assumption on the spatial variations of the admittivity.

%It originates from a smoothness prior built following the material in \cite{Kaipio05}, with the level of `smoothening' chosen so that the obtained reconstructions for {\it known measurement configurations} are reasonably good (cf. Figures \ref{example2}(c) and \ref{example3}(c)).

\subsection{Known homogeneous target admittivity} 
Figure \ref{example1} shows the results obtained when the {\em first stage} of our algorithm is applied to reconstructing the boundary shape and electrode locations for an object with a known homogeneous admittivity distribution $\varsigma\equiv 1$. While this situation has minor practical relevance, it serves as a test of the computational techniques for obtaining the derivatives with respect
to $\alpha$ and $\theta$. The cylindrical target object is $ \Omega = D\times (0,h_0) $ with $ h_0=1 $, and the curve $ \partial D $ is parametrized by 
\begin{equation*}%\label{target_param}
{\gamma}(\phi) = \bigg[ \frac{3.3}{(2.2^2\cos^2\phi + 1.5^2\sin^2\phi)^{1/2}} + 1.1e^{-(\phi-\pi)^6} + 0.88\cos\phi\sin(-2\phi) \bigg]\begin{bmatrix}\cos\phi\\ \sin\phi\end{bmatrix}.
\end{equation*}
%Note that such boundary cannot be expressed in the form \eqref{parametrization1} with a finite number of trigonometric Fourier coefficients. 
%The admittivity distribution inside $ {\Omega} $ is homogeneous, $ \varsigma \equiv 1 $. 
The width of the $ M=16 $ identical electrodes on $ \partial D\times (0,h_0) $ is $ 0.3 $ and their initial polar angles are of the form $ \vartheta_m = 2\pi(m-1)/M + \varepsilon_m $, $ m=1,\ldots,M $, where $ \varepsilon_m $ are independent realizations of a normally distributed random variable with zero mean and standard deviation $0.1$.

\begin{figure}[t!]
	\begin{center}
        \subfloat[The iterates.]{\includegraphics[width=0.45\textwidth]{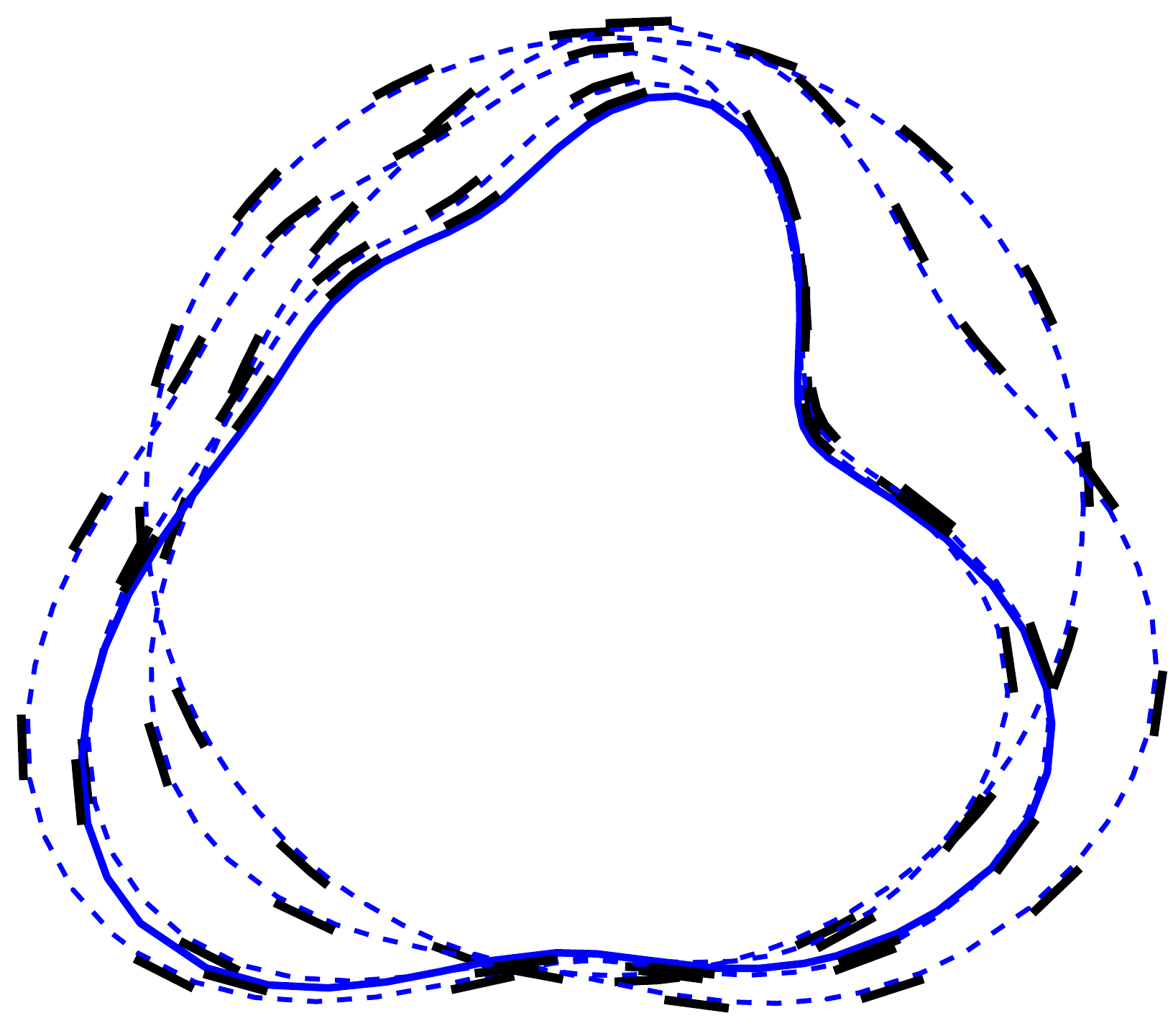}} \qquad \subfloat[Reconstructed geometry.]{\includegraphics[width=0.4\textwidth]{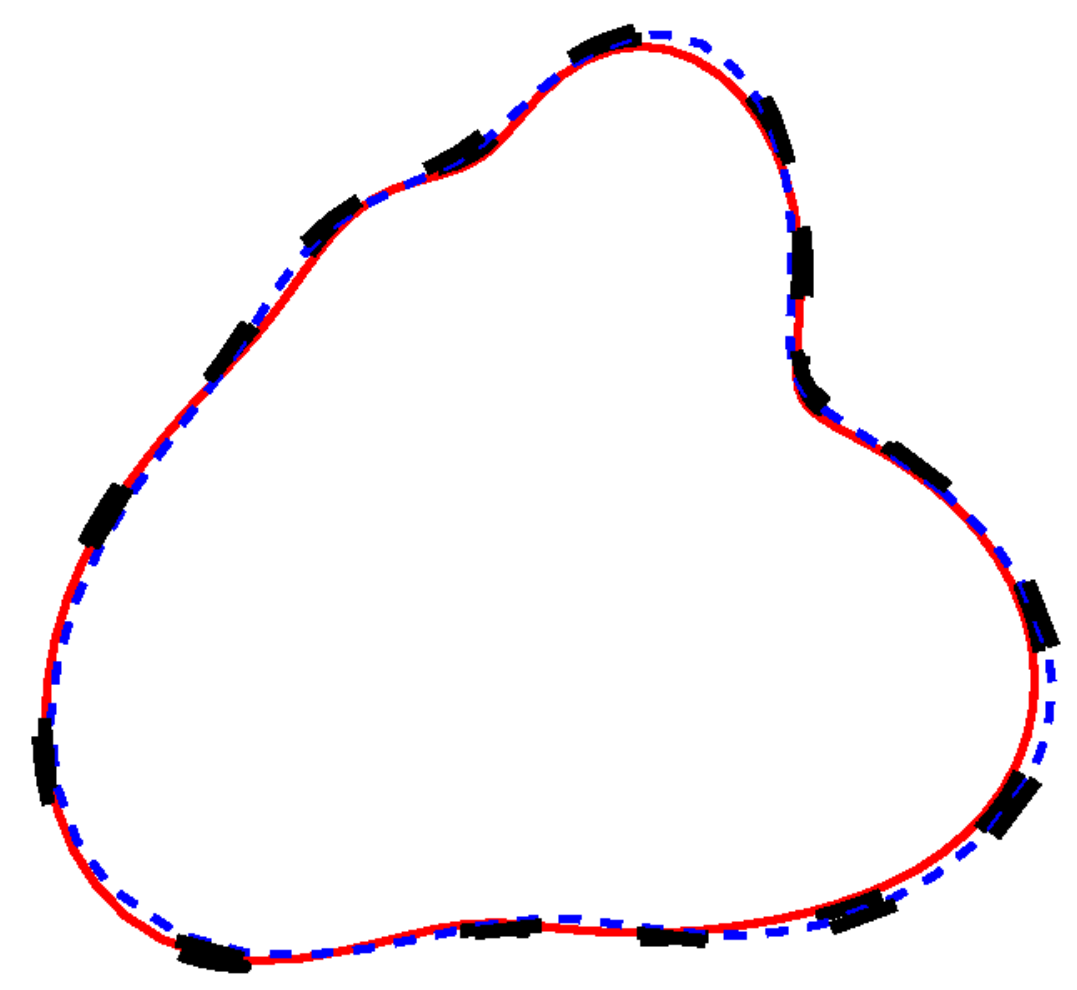}}\\
	\end{center}
	\caption{Retrieval of an unknown cross-section shape from noisy simulated data. (a)~The five iterates with the final one plotted with solid line. (b)~Comparison between the exact shape (red solid) and the retrieved one (blue dashed).}
	\label{example1}
\end{figure} 

%The data is simulated as in the beginning of this section by adding a realization of zero mean Gaussian random variable with variance \eqref{sim_noise} to the computed exact data corresponding to currents $ I^{(j)} = {\rm } $, $ j=1,\ldots,M-1 $. 

%Since the admittivity is known {\em a priori}, we modify the definition of the functional \eqref{mapfunctional} as follows: the term containing $ \Gamma_\sigma $ is deleted and in the first term we fix $ \sigma = \varsigma \equiv 1 $. 
Since the admittivity is known {\em a priori}, we do not estimate $\sigma^\star$ as explained in the beginning of Section~\ref{sec:algorithm}, but fix it to be identically $1$.
The first stage of the algorithm in Section~\ref{sec:algorithm} is then 
%applied with respect to the `shape variables' $ (\alpha,\theta) \in \R^{N}\times\R^{M} $ 
run with $ N=15$ and $M=16 $; in this case the final value of $(\alpha^\star, \theta^\star)$ describes the reconstructed measurement geometry. In the construction of the prior covariance $ \Gamma_\alpha $ we use \eqref{alpha_prior} with the selection $ a = s = 1 $ and set $ \Gamma_\theta = \tau^2 \mathbb{I} $ with $ \tau = 2\pi/M $, but the algorithm does not seem to be very sensitive with respect to these choices. Figure \ref{example1}(a) shows the required five iteration steps, one of which is the eventual reconstruction, obtained by choosing the initial guesses $ \alpha^{(0)} = [2.7,0,\ldots,0]^{\rm T} $ and $ \theta_m^{(0)} = 2\pi(m-1)/M $, $ m=1,\ldots,M-1 $. The final iterate is drawn with solid line and the others with dashed line. In Figure \ref{example1}(b), the target curve $ \gamma $ (red solid) is compared with the retrieved one (blue dashed).

The algorithm was run with several different target objects and in % almost 
all cases the results were qualitatively similar to what is illustrated in Figure~\ref{example1}, given that the examined shapes were not too complicated: If there were fine structures on a scale smaller than the electrode width, the results were poor. Further, the simpler the geometry, the faster the convergence was. It was also observed that the number of coefficients in \eqref{parametrization1} should not be too large, at most about $N=15$. A high number of coefficients results in unstable reconstructions and absurd shapes, with the performance of the algorithm slowing down.

\subsection{Smooth target admittivity} 
\label{sec:smooth}
In the second experiment, we apply the (whole) simultaneous reconstruction algorithm to data corresponding to a relatively simple target shape and a smooth admittivity distribution illustrated in Figure~\ref{example2}(a).   The shape of the target object is $ \Omega = D \times (0,h_0) $, $ h_0 = 0.5 $, where $ D $ is an ellipse with major and minor semi-axes $ 2 $ and  $ 1.5 $, respectively. 
The admittivity is homogeneous in the vertical direction, which allows us to only consider cross-sections in the visualizations.
The electrode positions are chosen in the same manner as in the previous example, with the constant electrode width being such that two fifths of $ \partial D\times (0,h_0) $ is covered by the electrodes. 

\begin{figure}[t!]
	\begin{center}
       \subfloat[Phantom.]{\includegraphics[width=0.45\textwidth]{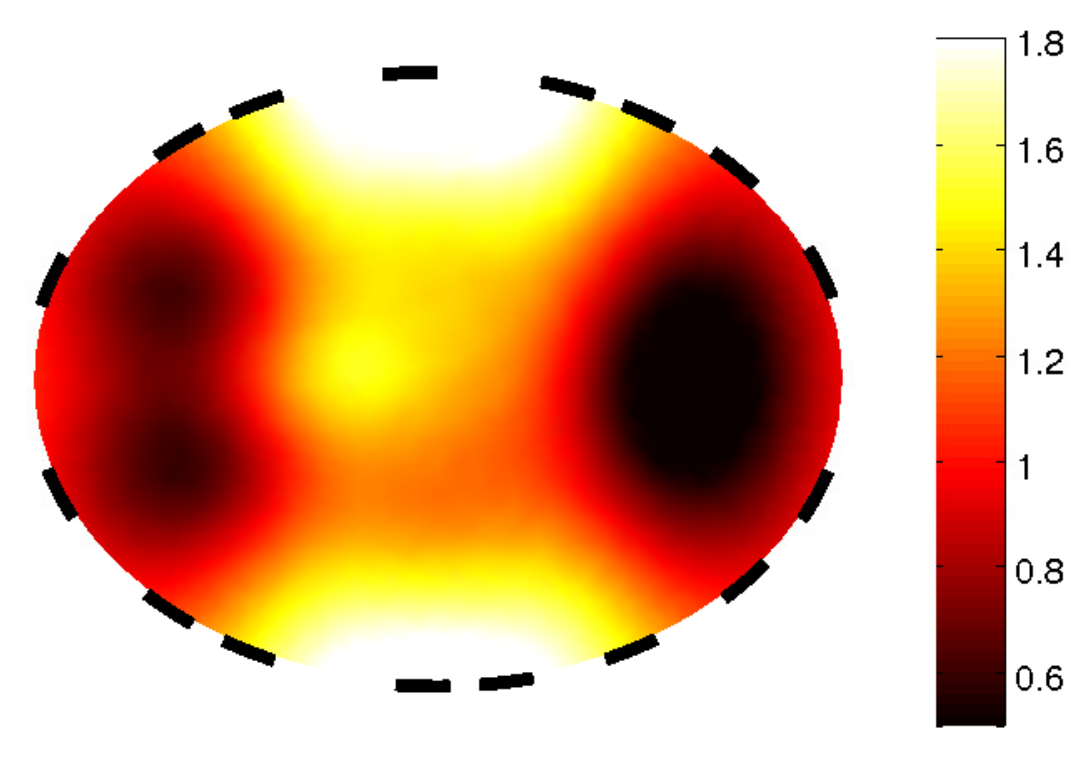}}\qquad  \subfloat[Incorrect geometry.]{\includegraphics[width=0.45\textwidth]{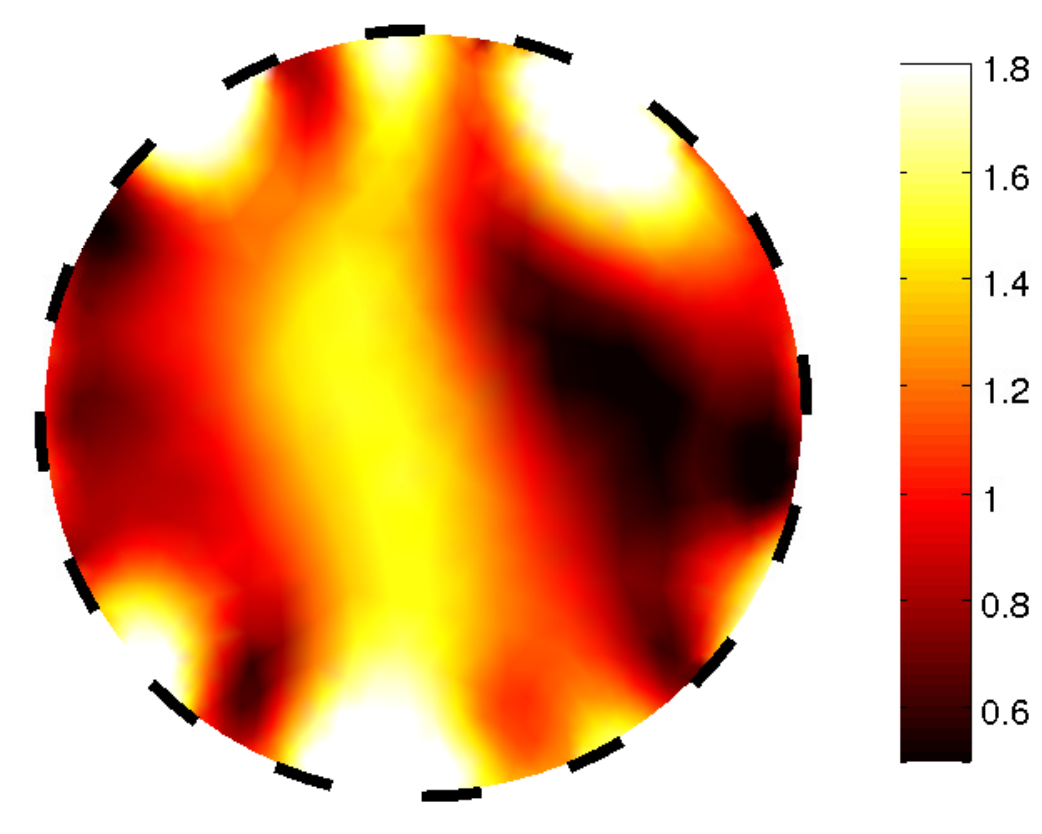}}\\[2mm]
        \subfloat[Correct geometry.]{\includegraphics[width=0.45\textwidth]{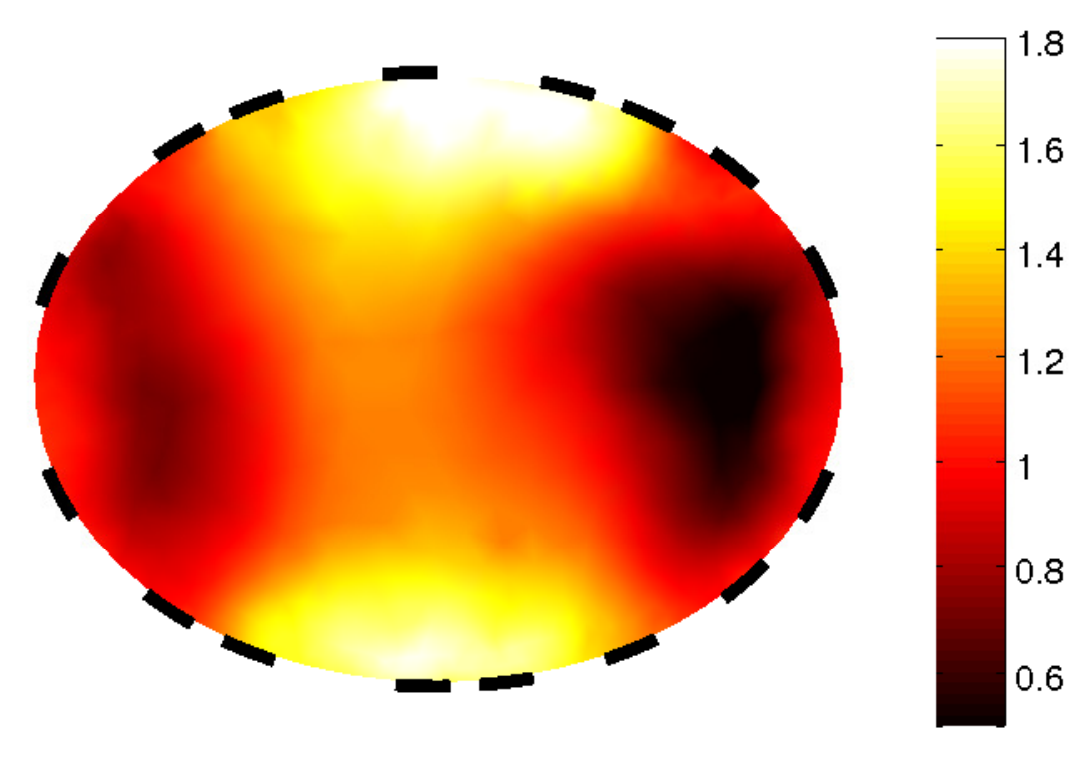}} \qquad  \subfloat[Simultaneous retrieval.]{\includegraphics[width=0.45\textwidth]{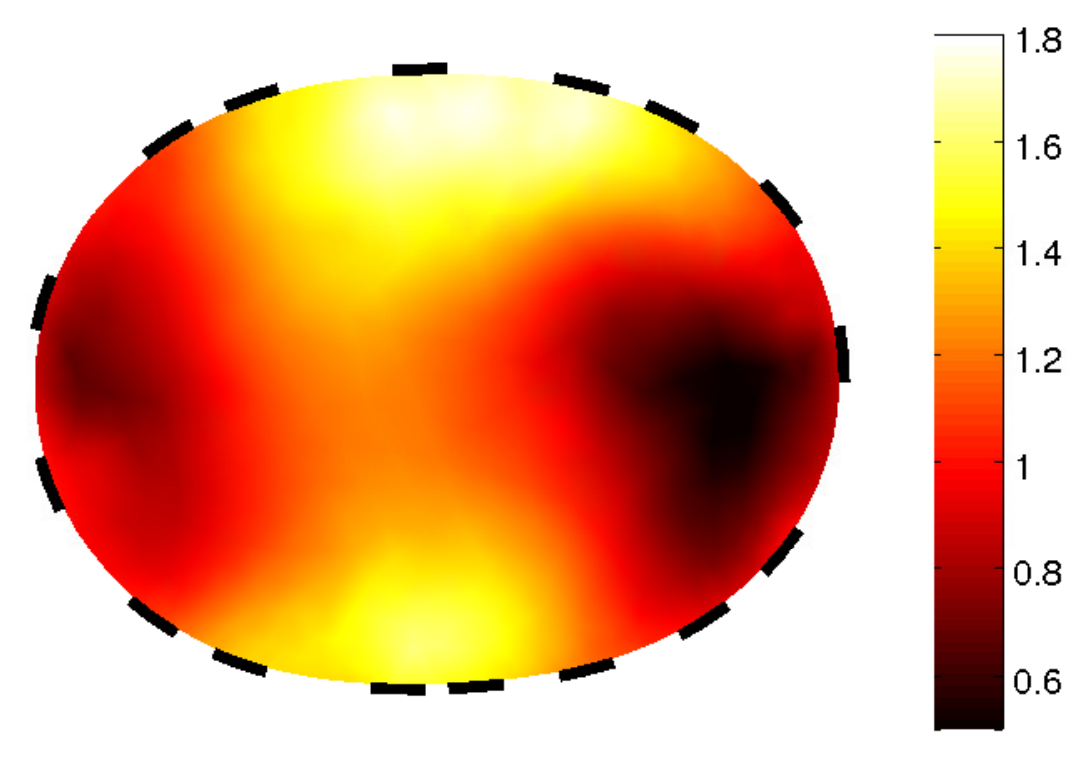}}\\
	\end{center}
	\caption{Experiment with a simple target shape and a smooth admittivity; each image represents a cross-section of the corresponding
phantom/reconstruction that is (almost) homogeneous in the vertical direction. (a)~Phantom used in data simulation. (b) Reconstruction corresponding to an incorrect fixed geometry. (c) Reconstruction corresponding to the exact geometry. (d) Simultaneously reconstructed  admittivity and measurement geometry.
}
\label{example2}
\end{figure}

In the reconstruction process we seek for a parameter triplet $ (\sigma,\alpha,\theta)\in \R^K\times \R^N\times \R^M $ with $ N=7$ and $M=16 $. Here, $ K $ is the number of nodes in the (fine enough) discretization of the background cylinder $ \Sigma = B \times (0,h_0)$, with $B$ chosen to be an origin-centered disk of radius $3$. As the initial guesses, we use 
%$\sigma^{(0)} \equiv 1 $, 
$ \alpha^{(0)} = [2,0,\ldots,0]^{\rm T} $ and $ \theta_m^{(0)} = 2\pi(m-1)/M $, $ m=1,\ldots,M-1 $. The prior covariances are constructed by selecting $ a = 0.1 $, $ s = 1 $ for $ \Gamma_\alpha $ of \eqref{alpha_prior} and $ \Gamma_\theta = \tau^2 \mathbb{I} $ with $ \tau = 2\pi/M $; see the fourth paragraph of Section~\ref{sec:Numerics} for an explanation about the choice of
$\Gamma_\sigma$ and~$\Gamma_\eta$.

The reconstruction in Figure \ref{example2}(b) was obtained by applying the {\em second stage} of the algorithm in Section~\ref{sec:algorithm} with respect to $\sigma$ to the setting where the last two terms of \eqref{mapfunctional} are deleted and $(\alpha,\theta)$ is fixed to be the initial guess $ (\alpha^{(0)},\theta^{(0)}) $; this approach corresponds to ignoring the incompleteness of the information on the measurement configuration and assuming stubbornly that the cross-section of the target object is a disk with uniformly distributed electrodes on its boundary. The reconstruction corresponding to the precise knowledge of the geometry is depicted in Figure~\ref{example2}(c); it was obtained in the same manner as the one in Figure~\ref{example2}(b), except this time around the geometry parameters $(\alpha, \theta) = (\beta, \vartheta)$ were fixed at the values describing the target configuration used in the simulation of the measurement data. 
Figure~\ref{example2}(d) visualizes simultaneous retrieval of the admittivity distribution and the measurement geometry; this reconstruction corresponds to application of the {\em whole} two-stage reconstruction algorithm of Section~\ref{sec:algorithm}, starting from the initial guess $(\alpha^{(0)},\theta^{(0)}) $ defined above. 

From Figure~\ref{example2}(b), it is obvious that ignoring the incompleteness of the information on the measurement configuration results in severe artefacts in the admittivity reconstruction close to the object boundary. On the other hand, a comparison of Figure~\ref{example2}(d) with  Figures~\ref{example2}(c) and \ref{example2}(b) demonstrates that the simultaneous retrieval of the admittivity distribution and
the measurement setting provides a qualitatively similar reconstruction as knowing
the exact geometry to begin with, and a far better one than
altogether ignoring the inaccuracies in the geometric information.

%In all the cases of Figure \ref{example2_2} the results were obtained by running the Gauss--Newton based minimization scheme until a (local) minimum was reached. 

\begin{figure}[b!]
	\begin{center}
       \subfloat[Phantom.]{\includegraphics[width=0.45\textwidth]{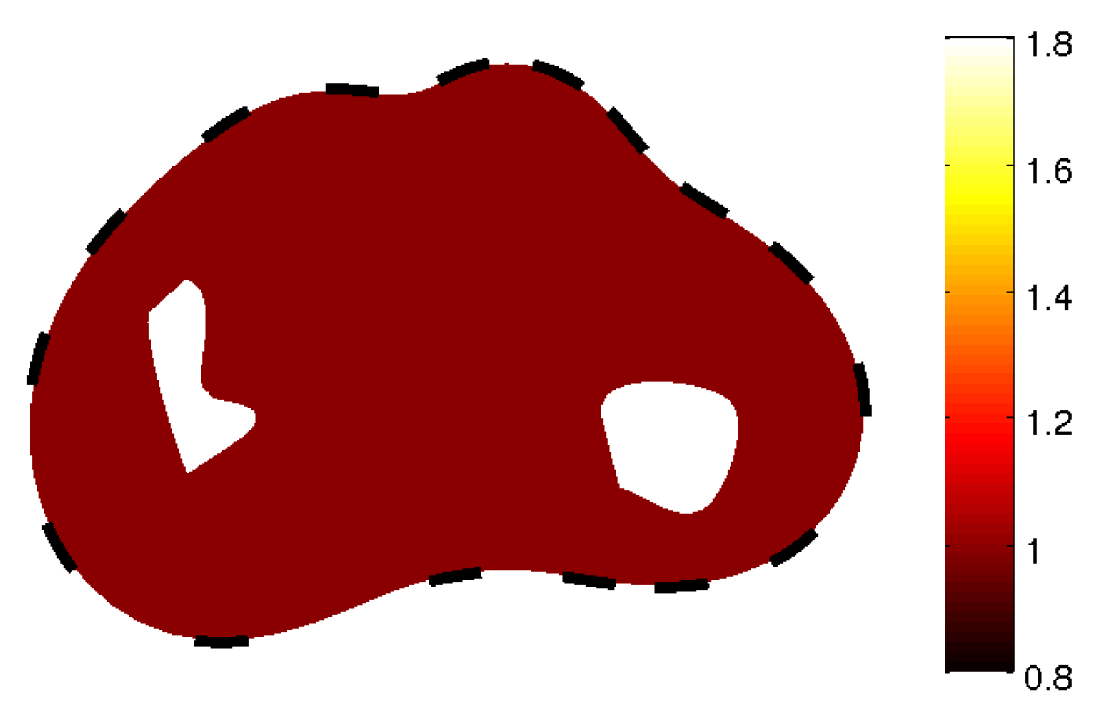}}\qquad  \subfloat[Incorrect geometry.]{\includegraphics[width=0.45\textwidth]{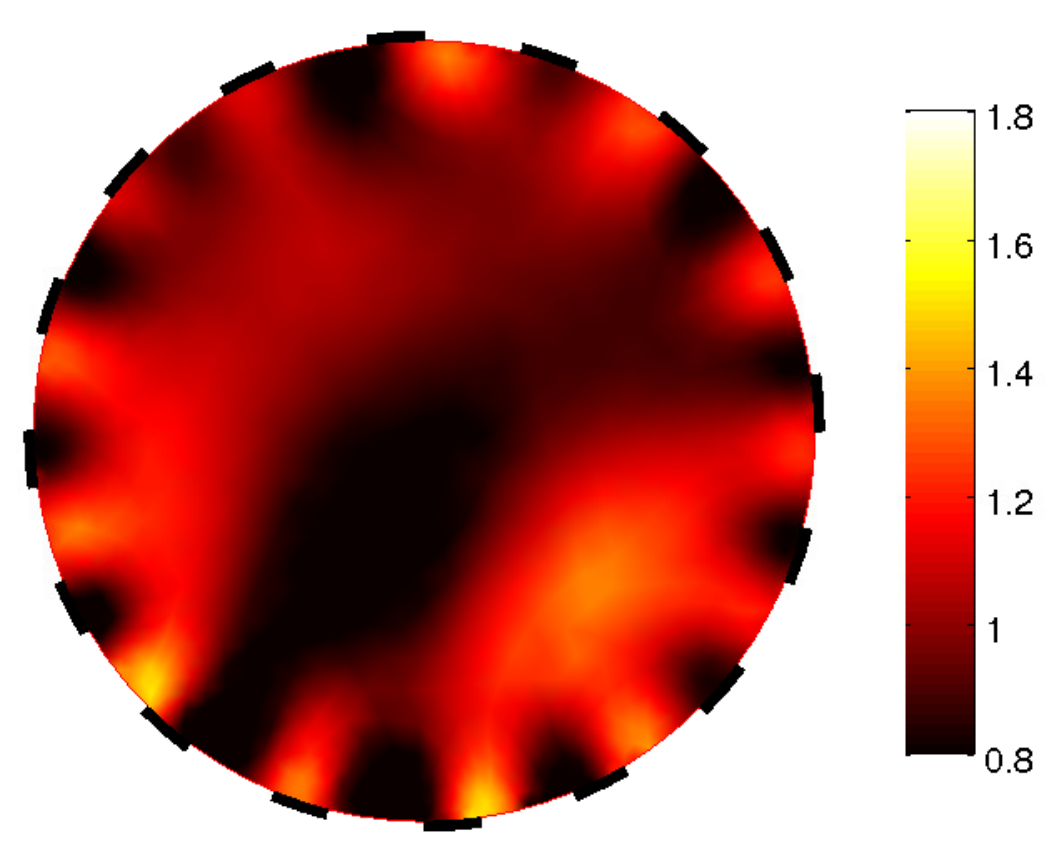}}\\[2mm]
               \subfloat[Correct geometry.]{\includegraphics[width=0.45\textwidth]{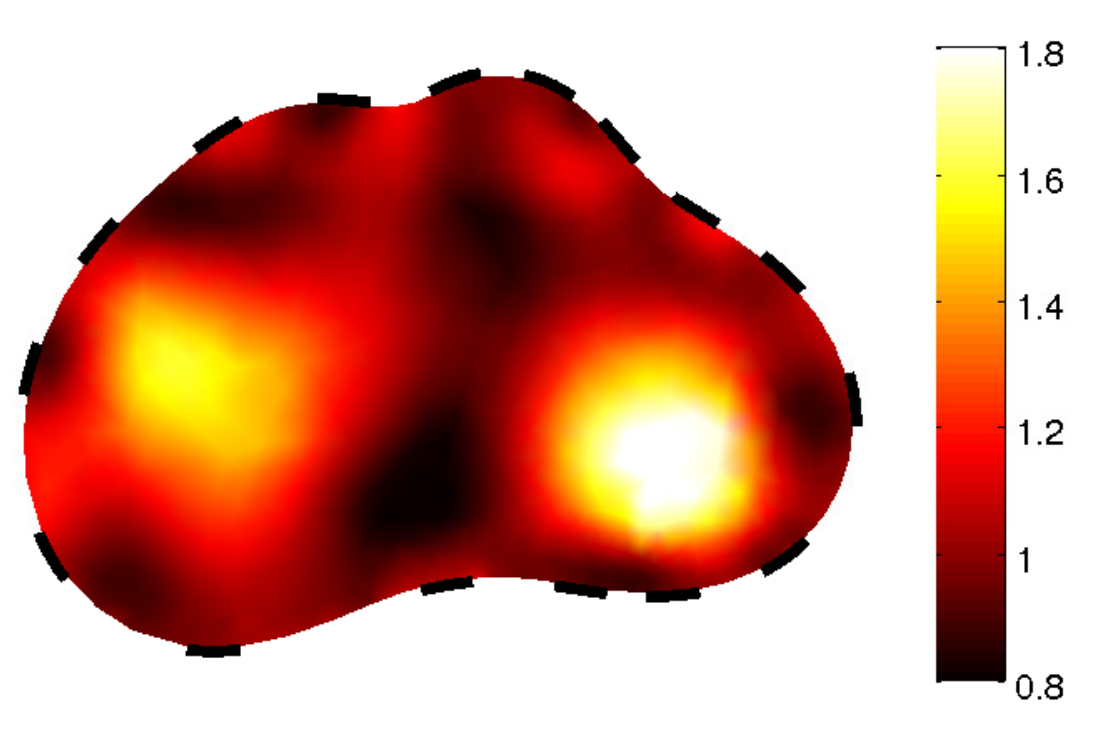}}\qquad  \subfloat[Simultaneous retrieval.]{\includegraphics[width=0.45\textwidth]{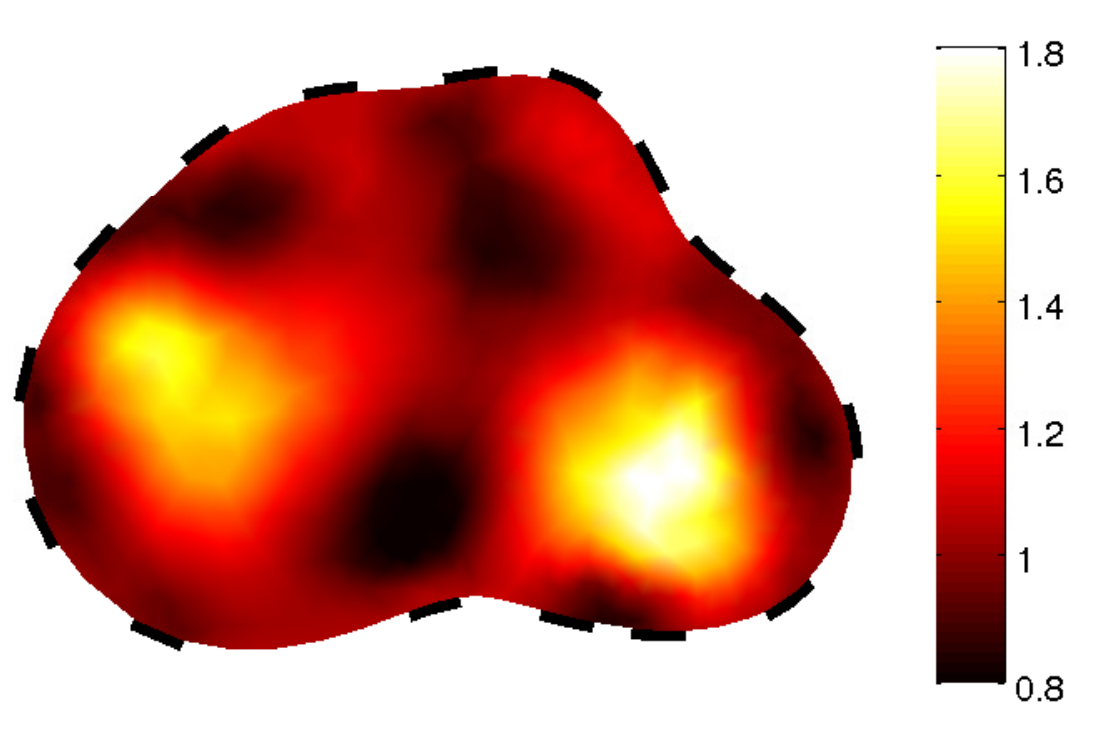}}\\
	\end{center}
	\caption{Experiment with a complicated target shape and a piecewise 
constant admittivity; each image represents a cross-section of the corresponding
phantom/reconstruction that is (almost) homogeneous in the vertical direction. (a)~Phantom used in data simulation; the admittivity of the inclusions is~$10$. (b) Reconstruction corresponding to an incorrect fixed geometry. (c) Reconstruction corresponding to the exact geometry. (d) Simultaneously reconstructed  admittivity and measurement geometry.
%In all simulations the data is corrupted by a zero mean Gaussian noise with componentwise variance $ 84 \% $ of \eqref{datanoise}. 
}
\label{example3}
\end{figure}

\subsection{Piecewise constant target admittivity} 
\label{sec:piecewise}
In our last experiment, we consider the target object illustrated in Figure \ref{example3}(a). It is characterized by $ \Omega = D\times (0,h_0) $, $ h_0 = 0.5 $, with $ \partial D $ parametrized by 
\begin{equation*}
%\label{target_param2}
{\gamma}(\phi) = \bigg[ \frac{3}{(1.5^2\cos^2\phi + 2^2\sin^2\phi)^{1/2}} + 0.75e^{-(\phi-\pi)^6} + 0.6\cos\phi\sin(-2\phi) \bigg]\begin{bmatrix}\cos\phi\\ \sin\phi\end{bmatrix}.
\end{equation*}
The corresponding admittivity distribution, which is homogeneous in the vertical direction, consists of a homogeneous unit background and two embedded inclusions with the constant admittivity level 10. 
The target electrodes are of equal width, they cover two fifths of $ \partial D $ and their locations are chosen as in the previous examples.

In this case we consider $ \Phi $ of \eqref{mapfunctional} as a function of $ (\sigma,\alpha,\theta)\in \R^K\times \R^N \times \R^M $, with $ N = 15 $, $ M=16 $ and $ K $ being the number of nodes in the mesh for the background cylinder $ \Sigma = B \times (0,h_0)$. Here, $B$ is once again a disk of radius $3$ centered at the origin. We assume the same prior information as in the previous example: $ \Gamma_\alpha $ is as in \eqref{alpha_prior} with $ a = 0.1$, $ s = 1 $ and $ \Gamma_\theta = \tau^2\mathbb{I} $ with $ \tau = 2\pi/M $. The initial guesses for the iterative reconstruction algorithm of Section~\ref{sec:algorithm} are %$ \sigma^{(0)} \equiv 1 $, 
$ \alpha^{(0)} = [1.5,0,\ldots,0]^{\rm T} $ and $ \theta_m^{(0)} = 2\pi(m-1)/M $, $ m=1,\ldots,M-1 $. 

The results are illustrated in Figure~\ref{example3}, with the
subimages organized as in Figure~\ref{example2} of the previous section. The reconstruction 
shown in Figure~\ref{example3}(b) was obtained by ignoring the incompleteness of
the information on the geometry, i.e., applying the {\em second stage} of the
reconstruction algorithm with respect to $\sigma$ when the second line
of \eqref{mapfunctional} is deleted and $(\alpha,\theta) = (\alpha^{(0)}, \theta^{(0)})$ is fixed. Figure~\ref{example3}(c)
corresponds to the precise knowledge of the measurement
setting, i.e., again ignoring the second line of \eqref{mapfunctional},
but fixing $(\alpha,\theta) =  (\beta, \vartheta)$ to be the parameter values describing the target 
configuration. Finally, the reconstruction in Figure~\ref{example3}(d) 
visualizes simultaneous retrieval of the admittivity distribution and the 
measurement geometry by the {\em whole} two-stage algorithm of Section~\ref{sec:algorithm}.
%Notice that the shape in Figure~\ref{example3}(b) corresponds, arguably, to a slightly rotated version of the target object; this cannot
%be considered a flaw in the algorithm, though, as the electrode measurements do not
%provide any information on the spatial orientation of the target.

The conclusions about the functionality of the different approaches are the same as in the previous experiment: The simultaneous
retrieval of the admittivity distribution and the measurement geometry 
provides a reconstruction that is comparable to the case that the object shape
and electrode locations are known accurately. On the other hand, ignoring the uncertainties
in the measurement configuration gives a poor outcome. 

%In Figure \ref{example2} is depicted the reconstruction obtained as a local minimizer of $ \Phi $ of \eqref{mapfunctional} with respect to $ \sigma $ in the case that the last two terms of \eqref{mapfunctional} are deleted and $ (\alpha,\theta) $ are fixed to the initial values $ (\alpha^{(0)},\theta^{(0)}) $. Figure \ref{example2} (c) shows the admittivity retrieval corresponding to precise knowledge of geometry; it is obtained as the previous one except that here the FE mesh is generated by using the parametrization of the target object $ \Omega $.
% The object of Figure \ref{example2}(d) is the simultaneous retrieval of the admittivity distribution and the geometry, it corresponds to a local minimizer of \eqref{mapfunctional}. 

%Each reconstruction of Figure \ref{example2} is acquired by running the Gauss--Newton based minimization scheme until a local minimum was reached. 

%\newpage 
%

\section{Concluding remarks}
\label{sec:Conclusion}

We have presented the Fr\'echet derivative of the measurement map of practical EIT with respect to the (exterior) object boundary shape as a part of the solution to a certain elliptic boundary value problem. Through three-dimensional numerical studies based on simulated data, we have demonstrated that utilizing such a geometric derivative, the estimation of the object  shape 
%fine-tuning of the geometric information about the examined object 
and the electrode locations can be incorporated into a Newton-type output least squares reconstruction algorithm in the framework on the  CEM of EIT.

\end{document}